\xpatchcmd{\proof}{\itshape}{\bfseries}{}{}
\newtheorem{theorem}{Theorem}
\newtheorem{corollary}{Corollary}
\newtheorem{lemma}{Lemma}
\theoremstyle{remark}
\newtheorem{definition}{Definition}
\title{The K\"ahler submanifolds between the ball bundles and the complex Euclidean space}
\author{Mingming Chen}
\address[Mingming Chen]{School of Mathematics and Statistics, Henan Normal University, Xinxiang 453007, China}
\email{chenmingming105@126.com}
\author{Yihong Hao}
\address[Yihong Hao]{Department of Mathematics, Northwest University, Xi'an \rm{710127}, China}
\email{haoyihong@126.com}
\author{An Wang}
\address[An Wang]{School of Mathematical Sciences, Capital Normal University, Beijing \rm{100048}, China}
\email{wangan@cnu.edu.cn}
\subjclass[2010]{32H02, 32Q40,  53B35}
\keywords{K\"ahler submanifold, Isometric embedding, Nash algebraic function, Ball bundle}
\date{\today}
\begin{document}
	
\begin{abstract}
In this paper, we provide a sufficient condition  on the non-existence of the common K\"ahler submanifolds between
the complex Euclidean space and the ball bundles of some Hermitian vector bundles over  K\"ahler manifolds.
  Then we get  the non-existence theorems on several classes of  ball bundles whose base spaces are  Hermitian symmetric spaces
  or the complete K\"ahler-Einstein manifolds.
\end{abstract}
\maketitle

\section{Introduction}\label{sec:1}
In 1953, Calabi \cite{Calabi1953} studied the existence of the holomorphic isometric embedding between
 K\"ahler manifolds and  complex space forms.
Following his researches,  many other important works have appeared on the characterization and classification of K\"ahler submanifolds of complex space forms \cite{Di-Loi,Di2012,Hao-Wang-Zhang,Loi2018,Loi2023,Loi-Zedda}, as well as the study of related number theory \cite{Clozel2003,Mok2011,Mok2012}.
 It is easy to see that the property of two K\"ahler manifolds
sharing a common K\"ahler submanifold is beneficial to discussing the existence
of the holomorphic isometric embedding between them.
Now,  if two K\"ahler manifolds  satisfy this property, they   are called
relatives by Di Scala and Loi \cite{Di2010}. Otherwise, they are not relatives.
Actually, the study of the relativity problem between two K\"ahler manifolds
 dates back to
Umehara \cite{Um1987} who proved that two complex space forms with Einstein constants of different signs
cannot share a common K\"ahler submanifold with induced metrics.
In  \cite{Di2010}, Di Scala and Loi proved that any  complex bounded domain  with its Bergman metric
and a projective K\"ahler manifold  are not relatives.
After that, Mossa showed  that a bounded homogeneous domain with a homogeneous K\"ahler metric
can not be a relative to a projective K\"ahler manifold \cite{Mossa2013}.
For the  relativity problem on indefinite complex space forms, the readers are referred  to \cite{Cheng2017}\cite{Cheng2023}\cite{Um1988}\cite{Zhang2023} and  a survey of some recent studies \cite{Yuan2019}.

 Since any irreducible Hermitian symmetric space of compact type can be holomorphically isometrically embedded into a complex projective space by the classical Nakagawa-Takagi embedding, it follows from the
result of Umehara \cite{Um1987}, the complex Euclidean space and the irreducible Hermitian symmetric
space of compact type cannot be relatives.
 Later, Huang and Yuan showed that a complex Euclidean space and a
Hermitian symmetric space of noncompact type cannot be relatives \cite{Huang2015}.
They introduced  Nash algebraic functions  as their powerful tool to  study  the existence of common complex submanifolds.
It has been demonstrated that Hermitian symmetric spaces of noncompact type \cite{Huang2015},  symmetrized polydisk \cite{Su2018},
Cartan-Hartogs domains \cite{ChengN2017,Zhang2022}, bounded homogeneous domains \cite{Cheng2021}, minimal domains \cite{Cheng2021},
and some Hua domains \cite{Ma2024} are not relatives to the complex Euclidean space.
In retrospect,  these studies on  the relativity problem between an arbitrary K\"ahler manifold and the complex Euclidean space,  all canonical metrics are Bergman metrics. The main reason is  the explicit forms of the Bergman kernel on such domains had been obtained.

Notice that the definition of the Bergman metric on bounded domains had been generalized to complex manifolds. And any pseudoconvex Hartogs domain over a bounded domain with fiber dimensional $k$ can be viewed as a special ball bundle in a  trivial Hermitian vector bundle $E$ of rank $k$.
For an arbitrary Hermitian vector bundle $E$, denote by $(\mathbb{B}(E), g_{\mathbb{B}(E)})$ the ball bundle equipped with its canonical metric (the complete Bergman metric or the complete K\"ahler-Einstein metric), it is  natural to ask whether the common submanifold between the complex Euclidean space and $(\mathbb{B}(E), g_{\mathbb{B}(E)})$ exists or not?
We introduce the exponent Nash-algebraic K\"ahler manifold in  Definition \ref{def EN} and  prove the following  result which will help us to solve the question.
\begin{theorem}\label{thm1}
Let  $\pi:(L, h)\rightarrow M$ be a Hermitian line bundle over
 a  real-analytic K\"ahler manifold $(M, g_{M})$ such that the K\"ahler form of $g_{M}$ satisfying $\sqrt{-1}\partial \bar\partial\log h=l\omega_{M}$ for a real number $l$.
 Let $E_{k}$ be the $k$-th direct sum of  $(L, h)$ for $k\in \mathbb{Z}^{+}$.
The ansatz
\begin{equation}\label{equ:om}
\omega_{\mathbb{B}(E_{k})}=\alpha\pi^{*}(\mathrm{Ric}(\omega_{M}))+\beta\pi^{*}(\omega_M)+\sqrt{-1}\partial\bar\partial u(\|v\|^{2}_{H_{k}})
\end{equation}
is an $(1, 1)$-form on the total space of $E_{k}$, where $u$ is  a real-valued smooth function, $||v||_{H_{k}}^{2}=\langle\xi,\xi\rangle h(z)=\sum_{j=1}^{k}\xi_{j}\bar\xi_{j}h(z)$, $\alpha, \beta\in \mathbb{R}$.
The ball bundle is defined by
$
 \mathbb{B}(E_{k}) := \{v \in E_{k} : ||v||_{H_{k}}^{2} < 1\}.
$

Let $V$ be a connected open subset in $\mathbb{C}$. Suppose that   $F:V\to \mathbb{C}^{n}$ and  $G:V\to \mathbb{B}(E_{k})$  are   holomorphic mappings such that
\begin{equation}\label{01}
 	F^{*} \omega _{\mathbb{C}^{n}  } =\mu G^* \omega_{\mathbb{B}(E_{k})} ~~on ~~V
\end{equation}
for a real constant $\mu$.
If $(M, g_{M})$ is an exponent Nash-algebraic K\"ahler manifold and $\exp u(x)$ is
rational, then $F$ must be a constant map.
\end{theorem}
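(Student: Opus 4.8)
The plan is to reduce \eqref{01} to a scalar functional equation, exponentiate it so that all logarithmic terms disappear and the right-hand side becomes a unitary factor times a Nash-algebraic function, polarize, and finally invoke the rigidity of such identities. First I would localize: fix $z_{0}\in V$, choose a holomorphic chart $(U;t)$ on $M$ around $\pi(G(z_{0}))$, a holomorphic frame of $L$ over $U$ in which $h$ is a positive real-analytic function $h(t)$, and a local potential $\phi$ of $\omega_{M}$ on $U$. Since $\mathrm{Ric}(\omega_{M})=-\sqrt{-1}\,\partial\dbar\log\det(\partial_{i}\partial_{\bar j}\phi)$ and, by hypothesis, $\sqrt{-1}\,\partial\dbar\log h=l\,\omega_{M}$ (so $\log h-l\phi$ is pluriharmonic on $U$ and, absorbing a holomorphic factor into the frame, $h=e^{l\phi}$), a local K\"ahler potential of $\omega_{\mathbb{B}(E_{k})}$ on $\pi^{-1}(U)$ is
\[
\Psi(t,\xi)=-\alpha\log\det\!\big(\partial_{i}\partial_{\bar j}\phi(t)\big)+\beta\,\phi(t)+u\big(\langle\xi,\xi\rangle\,h(t)\big).
\]
Writing $G=(P,\zeta)$ with $P:V\to U$, $\zeta:V\to\C^{k}$, $\|\zeta(z)\|^{2}h(P(z))<1$, and using $F^{*}\omega_{\C^{n}}=\sqrt{-1}\,\partial\dbar\,\|F\|^{2}$ (absorbing the normalizing constant into $\mu$), equation \eqref{01} yields, on a simply connected neighbourhood of $z_{0}$,
\[
\|F(z)\|^{2}=\mu\,\Psi\big(P(z),\zeta(z)\big)+\eta(z)+\overline{\eta(z)}
\]
for some holomorphic $\eta$. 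If $\mu=0$ this reads $\sum_{j}|F_{j}'|^{2}\equiv 0$, so $F$ is constant; assume henceforth $\mu\neq0$.

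Next I would polarize, replacing $\bar z$ by an independent variable $w$: both sides being real-analytic in $z$, there is a holomorphic identity near $(z_{0},\bar z_{0})$,
\[
\langle F(z),\overline{F}(w)\rangle=\mu\Big[-\alpha\log\det g_{M}\big(P(z),\overline P(w)\big)+\beta\,\phi\big(P(z),\overline P(w)\big)+u\big(\langle\zeta(z),\overline\zeta(w)\rangle\,h(P(z),\overline P(w))\big)\Big]+\eta(z)+\overline\eta(w),
\]
where $g_{M}(t,\bar s),\phi(t,\bar s),h(t,\bar s)$ denote the holomorphic polarizations. Exponentiating removes every logarithm:
\[
\exp\langle F(z),\overline F(w)\rangle=e^{\eta(z)}\,\overline{e^{\eta}}(w)\cdot\det g_{M}\big(P,\overline P\big)^{-\mu\alpha}\cdot e^{\phi(P,\overline P)\mu\beta}\cdot\big(\exp u\big)\!\big(\langle\zeta,\overline\zeta\rangle h(P,\overline P)\big)^{\mu}.
\]
Here $\exp u$ is rational by hypothesis, and because $(M,g_{M})$ is exponent Nash-algebraic, $e^{\phi(t,\bar s)}$ is Nash-algebraic; hence $h(t,\bar s)=\big(e^{\phi(t,\bar s)}\big)^{l}$ and $\det g_{M}(t,\bar s)$ are algebraic as well, the latter being a universal rational expression in the partial derivatives of $\log e^{\phi}$. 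This is exactly where the two hypotheses are consumed: they guarantee that $\det g_{M}(t,\bar s)$, $h(t,\bar s)$ and $(\exp u)(\cdot)$, taken to the relevant real powers, lie in the algebraic class the argument needs. Consequently the right-hand side is $e^{\eta(z)}\,\overline{e^{\eta}}(w)$ times a function obtained by composing Nash-algebraic functions with the holomorphic maps $P$ and $\zeta$, which is Nash-algebraic in $(z,w)$ once $P$ and $\zeta$ are themselves known to be Nash-algebraic.

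The decisive input is then the rigidity of an identity of this shape, which I would expect to have been isolated as a lemma earlier, in the spirit of Huang--Yuan \cite{Huang2015}: the functional equation, read through an Artin-type approximation together with the algebraicity just established, forces $F$, $P$ and $\zeta$ to be Nash-algebraic maps; with everything algebraic, the displayed identity becomes $e^{A(z,w)}=e^{\eta(z)+\overline\eta(w)}\,\mathcal{R}(z,w)$ with $A(z,w)=\langle F(z),\overline F(w)\rangle$ Nash-algebraic and $\mathcal{R}$ Nash-algebraic, so $e^{A(z,w)-\eta(z)-\overline\eta(w)}$ is Nash-algebraic; using the separation of variables in $e^{\eta(z)}\overline{e^{\eta}}(w)$ and the exponential/Nash-algebraic dichotomy (no nonconstant polynomial over $\C(z,w)$ relates $e^{g}$ to a Nash-algebraic function unless $g$ is, up to pluriharmonic terms, trivial), one concludes that $\langle F(z),\overline F(w)\rangle-\eta(z)-\overline\eta(w)$ is constant. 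Setting $w=\bar z$ gives $\|F(z)\|^{2}=c+2\,\mathrm{Re}\,\eta(z)$, a pluriharmonic function; since $\|F\|^{2}=\sum_{j}|F_{j}|^{2}\ge 0$, each $F_{j}$ is constant. I expect the genuine difficulty to be concentrated precisely in this last step — extracting the Nash-algebraicity of $F$, $P$, $\zeta$ from the functional equation, and then deploying the exponential/algebraic dichotomy in two variables — whereas the localization, the explicit potential $\Psi$, and the polarization are routine.
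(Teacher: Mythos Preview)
Your localization, the explicit potential $\Psi$, and the polarization step are all correct and match the paper's setup. The genuine gap is exactly where you suspect it lies, but your proposed resolution is not the right one. You assert that ``the functional equation, read through an Artin-type approximation \ldots, forces $F$, $P$ and $\zeta$ to be Nash-algebraic maps,'' and then run the exponential/algebraic dichotomy in $(z,w)$. Neither the paper nor the Huang--Yuan method proves that the component maps are Nash-algebraic; in general there is no reason for $P$ or $\zeta$ to be so, and without this the right-hand side of your exponentiated identity is \emph{not} Nash-algebraic in $(z,w)$, so the dichotomy does not apply.

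The paper bypasses this entirely via the transcendence-degree device (its Lemma~\ref{field}, originating in \cite{Huang2015}): take the set $S=\{f_{1},\ldots,f_{n},g_{1},\ldots,g_{m+k}\}$ of all component functions of $F$ and $G$, choose a transcendence basis $\{h_{1},\ldots,h_{r}\}\subset S$ over the rational function field, and write each $f_{j},g_{j}$ as a Nash-algebraic function $\hat f_{j}(s,X),\hat g_{j}(s,X)$ evaluated at $X=(h_{1}(s),\ldots,h_{r}(s))$. The polarized identity $\Psi(s,X,t)=0$ holds along the graph $X=\Phi(s)$; algebraic independence of the $h_{i}$ then forces each Taylor coefficient $\Psi_{\tau}(s,X,0)$ to vanish identically in $(s,X)$, so $\Psi(s,X,t)=\Psi(s,X,0)$ for all $(s,X,t)$. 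Now fix $s=s_{0}$ and $t=t_{0}$ with $\sum\hat f_{j}(s_{0},X)\bar f_{j}(t_{0})$ nonconstant in $X$; subtracting the $t=0$ equation eliminates the $\mathfrak{a}$-terms and yields, in the single vector variable $X$, an identity of the exact shape $\exp\{H(X)\}=R(\ldots)\prod H_{j}(X)^{\mu_{j}}$ with $H$ nonconstant Nash-algebraic and each $H_{j}$ Nash-algebraic. This contradicts Lemma~\ref{Ma2024}. Thus the missing idea is not an Artin approximation but the introduction of the auxiliary algebraically independent variables $X$, which converts an identity involving possibly transcendental maps into a genuine Nash-algebraic identity to which the rigidity lemma applies.
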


In order to  state  Theorem \ref{thm1} clearly, we induce two conditions:
\begin{description}
  \item[A] K\"ahler manifold  $(M, g_{M})$  is  exponent Nash-algebraic;
  \item[B] The real function $\exp u(x):[0,1)\rightarrow \mathbb{R}^{+}$ \ is \ rational \ in  \ $x$.
\end{description}

The result of  Theorem \ref{thm1} may not be true if we remove condition A or B. In fact, assume that the ansatz
$
\omega_{\mathbb{B}(E_{k})}
$
in \eqref{equ:om} induces a K\"ahler metric $g_{\mathbb{B}(E_{k})}$ on  $\mathbb{B}(E_{k})$.
Then, the $(1,1)$ forms
\begin{equation}\label{two metrics}
\omega'_{M}:=\alpha\mathrm{Ric}(\omega_{M})+\beta\omega_M \ \ \text{and} \ \
\omega_{D_{p}}:=\sqrt{-1}\partial\bar\partial u(||v||_{H_{k}}^{2})|_{D_p}
\end{equation}
  induce  K\"ahler metrics $g'_{M}$ on $M$ and  $g_{D_{p}}$ on the domain
 $D_{p}=\{v \in E_{k}|_{p}: ||v||_{H_{k}} < 1\}$ respectively.
It is easy to see that  $(M, g'_M)$ and $(D_{p}, g_{D_{p}})$ are the K\"ahler submanifolds of $(\mathbb{B}(E_{k}), g_{\mathbb{B}(E_{k})})$.
Obviously, $(\mathbb{B}(E_{k}), g_{\mathbb{B}(E_{k})})$ and $(\mathbb{C}^{n}, g_{\mathbb{C}^{n}})$ have common K\"ahler submanifolds if
$(M, g'_M)$  or  $(D_{p}, g_{D_{p}})$  is so.
However, condition $\mathbf{A}$ can rule out the existence of the common K\"ahler submanfold between $(M, g_M')$  and $(\mathbb{C}^{n}, g_{\mathbb{C}^{n}})$ by Lemma \ref{prop1.3}.
 It was proved by Cheng and the second author \cite{Cheng2021}  that
 for any positive real number $\mu$, K\"ahler manifold $(M, \mu g_{M})$ and
 the complex Euclidean space  do not have common K\"ahler submanifolds  if
 $g_{M}$ is exponent Nash-algebraic. Notice that  the metric  $g_{D_{p}}$ in \eqref{two metrics}  is
exponent Nash-algebraic on $D_{p}$ if $\exp u$ is rational. Hence, condition $\mathbf{B}$ also rules out the existence of the common K\"ahler submanfold between $(D_{p}, g_{D_{p}})$ and $(\mathbb{C}^{n}, g_{\mathbb{C}^{n}})$.

Suppose that  $(\mathbb{B}(E_{k}), g_{\mathbb{B}(E_{k})})$ is a K\"ahler manifold with K\"ahler form $\omega_{\mathbb{B}(E_{k})}$ and  there exists  common K\"ahler submanifolds between it and  $(\mathbb{C}^{n}, g_{\mathbb{C}^{n}})$. Then the equation  \eqref{01} follows from it.
As an application of Theorem \ref{thm1}, we study the non-existence of the common submanifold between the complex Euclidean space and  several classes of  ball bundles equipped with their canonical metrics.
For the Bergman metric, we consider three kinds of  ball bundles over Hermitian symmetric spaces.
The first  non-existence result is given by Corollary \ref{cor1} about the ball bundles in the direct sum bundle of the top exterior product $\bigwedge^{n} T(1,0)$ of the holomorphic tangent bundle over the compact Hermitian symmetric space.
The second  non-existence result is given by Corollary \ref{cor2} about the Hartogs domains over bounded symmetric domains.
The last one is  an existence result  given by Corollary \ref{cor3} about the Hartogs domains over the complete flat spaces.
For K\"ahler-Einstein metric, we consider a class of ball bundles in the direct sum bundles of the negative Hermitian line bundles over the complete K\"ahler-Einstein manifolds with negative Ricci curvatures and get  the non-existence result in Corollary \ref{cor4}.

The organization of this article is as follows. We start in Sect.\ref{sec2}
by presenting  some
properties of Nash function.
In Sect.\ref{sec3}, we introduce the ansatz proposed  by Ebenfelt, Xiao and Xu on  ball bundles and give the proof
of Theorem \ref{thm1}. As an application, we obtain Corollary  \ref{cor1}-\ref{cor4} in the last section.

\section{Some  properties of Nash algebraic function}\label{sec2}
Let $D\subset{\mathbb C}^n$ be an open subset and $f$ be a holomorphic function on $D$.
We say that $f$ is a Nash function at $z_{0}\in D$ if there exists an open neighbourhood $U$ of $z_{0}$
and a nonzero polynomial $ P : \mathbb{C}^n \times \mathbb{C}\rightarrow \mathbb{C}$,
such that $P(z, f (z)) \equiv0$ for $z \in U$.
Thus, a holomorphic function defined on $D$ is called  a Nash function if it is a Nash function at every point in $D$.
We denote the family of Nash functions on $D$  by $\mathcal{N}(D)$.
Some basic properties of Nash functions are collected by the following lemmas.
For more details, we refer the readers to  \cite{Huang1994,Huang2014,Tw1990}.
\begin{lemma}\cite{Tw1990}\label{Tw1990}
Let $f, g\in \mathcal{N}(D)$. Then the following holds:
\begin{description}
    \item[(1)] $f\pm g$, $fg$, $\displaystyle\frac{f}{g}\in \mathcal{N}(D)$ and
               $\displaystyle\frac{\partial f}{\partial z_{i}} \in \mathcal{N}(D)$ for each $1\leq i\leq n.$
    \item[(2)] For any fixed $(z_{k+1}^{0},\cdots, z_{n}^{0})$,
               $f(z_{1},\cdots, z_{k}, z_{k+1}^{0},\cdots, z_{n}^{0})$
               is a Nash function in $z_{1}$,$\cdots$, $z_{k}$.
    \item[(3)] Let $D_{1}$ and $D_{2}$ be two open subsets of ${\mathbb C}^n$.
               Suppose that $\varphi(z, w)\in \mathrm{Hol}(D_{1}\times D_{2} )$ and $\varphi(z, w_{0})\in \mathcal{N}(D_{1})$
               for any fixed $w_{0}\in D_{2}$ (respectively, $\varphi(z_{0}, w)\in \mathcal{N}(D_{2})$,
               for any fixed $z_{0}\in D_{1}$). Then $\varphi(z, w)\in \mathcal{N}(D_{1}\times D_{2} )$.
    \item[(4)] The composition of Nash functions is a Nash function.
  \end{description}
\end{lemma}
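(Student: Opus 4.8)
The plan is to recast Nash-ness as an algebraic condition on germs and then dispatch (1), (2), (4) by field theory, leaving the genuinely analytic content to (3). Fix a base point $z_{0}\in D$, let $\mathcal{O}_{z_{0}}$ be the ring of holomorphic germs at $z_{0}$ and $\mathcal{M}_{z_{0}}$ its fraction field. Since $z_{1},\dots,z_{n}$ are algebraically independent as germs, the field $\mathbb{C}(z)=\mathbb{C}(z_{1},\dots,z_{n})$ sits inside $\mathcal{M}_{z_{0}}$, and, directly from the definition of a Nash function, $f$ is Nash at $z_{0}$ if and only if the germ $f_{z_{0}}$ is algebraic over $\mathbb{C}(z)$; equivalently $f_{z_{0}}$ lies in the relative algebraic closure $K_{z_{0}}$ of $\mathbb{C}(z)$ in $\mathcal{M}_{z_{0}}$, which is a field. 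As $f\in\mathcal{N}(D)$ means $f$ is Nash at every point, it suffices to argue at the arbitrary point $z_{0}$. I will use the irreducible generator $P$ of the prime ideal $I=\ker\bigl(\mathbb{C}[z,\zeta]\to\mathcal{O}_{z_{0}},\ \zeta\mapsto f_{z_{0}}\bigr)$; because $I\cap\mathbb{C}[z]=0$ one has $\deg_{\zeta}P\ge 1$, and by Gauss's lemma $P$ is irreducible, hence (in characteristic zero) separable, over $\mathbb{C}(z)$.

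\textbf{Parts (1) and (4).} Statement (1) is immediate from $K_{z_{0}}$ being a field: $f\pm g$, $fg$ and $f/g$ belong to $K_{z_{0}}$, hence are Nash at $z_{0}$. For $\partial f/\partial z_{i}$, differentiating $P(z,f)\equiv0$ gives $(\partial_{z_{i}}P)(z,f)+(\partial_{\zeta}P)(z,f)\,\partial_{z_{i}}f\equiv0$, and separability forces $(\partial_{\zeta}P)(z,f)\not\equiv0$, so $\partial_{z_{i}}f=-(\partial_{z_{i}}P)(z,f)/(\partial_{\zeta}P)(z,f)\in\mathbb{C}(z)(f)\subseteq K_{z_{0}}$. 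For (4), let $h\in\mathcal{N}(D')$ and let $g=(g_{1},\dots,g_{m})$ have Nash components with $g(z_{0})=w_{0}$; the germ $\theta=h\circ g$ lies in $\mathcal{M}_{z_{0}}$. As each $g_{i}\in K_{z_{0}}$, the field $L=\mathbb{C}(z)(g_{1},\dots,g_{m})$ is algebraic over $\mathbb{C}(z)$, and pulling back a polynomial relation satisfied by $h$ exhibits $\theta$ as algebraic over $L$ — should the $g_{i}$ be algebraically dependent, one first restricts $h$ to the Zariski closure of the image germ of $g$, which is Nash by the mechanism used for (2) below. Transitivity of algebraic extensions then gives $\theta$ algebraic over $\mathbb{C}(z)$, and, being holomorphic, $\theta$ is Nash.

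\textbf{Part (2).} It suffices to fix one variable at a time, so I show that $f(z_{1},\dots,z_{n-1},z_{n}^{0})$ is Nash at an arbitrary point of the slice and then iterate. Restriction to $\{z_{n}=z_{n}^{0}\}$ is the ring homomorphism $\mathcal{O}_{z_{0}}\to\mathcal{O}_{z_{0}'}$ induced by $z_{i}\mapsto z_{i}$ for $i<n$ and $z_{n}\mapsto z_{n}^{0}$, and it carries the relation $P(z,f)\equiv0$ to the identity obtained by evaluating $P(z_{1},\dots,z_{n-1},z_{n}^{0},\zeta)$ at the restricted germ. The one thing to check is that this specialization is not the zero polynomial; it could vanish only if $z_{n}-z_{n}^{0}$ divided $P$, which is impossible since $P$ is irreducible and genuinely involves $\zeta$ while $z_{n}-z_{n}^{0}$ does not. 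Hence the restricted germ satisfies a nonzero polynomial over $\mathbb{C}(z_{1},\dots,z_{n-1})$ and is Nash; iterating over the fixed coordinates proves (2).

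\textbf{Part (3), the crux.} This is the Nash analogue of passing from separate to joint analyticity, and I expect it to be the principal obstacle; a one-sided hypothesis is genuinely insufficient (for example $\varphi(z,w)=e^{w}z$ is Nash in $z$ for each fixed $w$ yet is not jointly Nash), so both separate-Nash assumptions enter. Fix a reference point $z^{*}\in D_{1}$, and for integers $M,N$ let $E_{M,N}$ be the set of $w_{0}\in D_{2}$ for which $\varphi(\cdot,w_{0})$ is annihilated near $z^{*}$ by some nonzero polynomial of degree at most $M$ in $z$ and $N$ in $\zeta$. Normalizing coefficients to the unit sphere and using that the Taylor coefficients of $P(z,\varphi(z,w_{0}))$ at $z^{*}$ depend holomorphically on $w_{0}$, one sees that a limit of annihilators is again an annihilator, so each $E_{M,N}$ is closed; since $\bigcup_{M,N}E_{M,N}=D_{2}$, the Baire category theorem furnishes a polydisc $\Omega\subseteq D_{2}$ on which the annihilating bidegree is uniformly bounded. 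The delicate core is to assemble the resulting holomorphically varying family of annihilators into a single $Q(z,w,\zeta)\in\mathbb{C}[z,w,\zeta]$ with $Q(z,w,\varphi(z,w))\equiv0$ on $D_{1}\times\Omega$: the annihilator coefficients depend holomorphically on $w$, and to upgrade this dependence from holomorphic to polynomial I would invoke the second separate-Nash hypothesis together with parts (1) and (2), which force these coefficient functions to be algebraic in $w$. Once $Q$ is produced on $D_{1}\times\Omega$, joint holomorphy of $\varphi$ and the identity theorem propagate $Q(z,w,\varphi)\equiv0$ to all of $D_{1}\times D_{2}$, yielding $\varphi\in\mathcal{N}(D_{1}\times D_{2})$; interchanging the roles of $z$ and $w$ is symmetric.
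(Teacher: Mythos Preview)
The paper does not prove this lemma; it is quoted from Tworzewski with no argument supplied, so there is no proof in the paper to compare yours against. Your treatments of (1), (2), and (4) via the relative algebraic closure $K_{z_{0}}$ of $\mathbb{C}(z)$ in the field of meromorphic germs are standard and essentially correct. (For (4) the cleanest way to sidestep the dependency issue is simply to take the \emph{monic} minimal polynomial of $h$ over $\mathbb{C}(w)$, so that its pullback along $g$ is automatically nonzero in $\zeta$; the detour through the Zariski closure of the image is then unnecessary.)

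For (3) your observation that the one-sided hypothesis cannot suffice is correct and worth recording: $\varphi(z,w)=e^{w}z$ is Nash in $z$ for every fixed $w$ but is not jointly Nash, so the ``respectively'' in the statement must be read as requiring \emph{both} separate-Nash conditions. However, the core of your argument---upgrading the holomorphic dependence of the annihilator coefficients $c_{I}(w)$ to Nash dependence in $w$---is only gestured at, and the mechanism you name (``invoke the second separate-Nash hypothesis together with parts (1) and (2)'') is in danger of circularity. Parts (1) and (2) would tell you that $\partial_{z}^{\alpha}\varphi(z^{*},w)$ is Nash in $w$ only if you already knew $\varphi$ were jointly Nash, because differentiating in $z$ and then restricting to $z=z^{*}$ is not an operation the separate hypotheses control. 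The repair is to build the linear system determining the $c_{I}(w)$ not from Taylor coefficients at $z^{*}$ but from finitely many \emph{point evaluations} $\varphi(z^{(1)},w),\dots,\varphi(z^{(L)},w)$, which are Nash in $w$ directly by the second hypothesis; Cramer's rule on a suitable maximal-rank subsystem then yields the $c_{I}(w)$ as rational combinations of Nash functions, hence Nash. You still owe two routine but non-omittable checks: that for generically chosen $z^{(j)}$ and generic $w$ the finite subsystem already cuts out the full annihilator space $V_{w}$, and that a relation $\sum_{I}c_{I}(w)z^{i}\varphi^{k}=0$ with Nash coefficients in $w$ produces, by transitivity of algebraic extensions over $\mathbb{C}(z,w)$, a genuine polynomial relation $Q(z,w,\varphi)=0$.
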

\begin{lemma}\label{Ma2024}\cite{Ma2024}
Let $V\subset \mathbb{C} ^{\kappa }$
be a connected open set,
$\xi =\left ( \xi _{1} ,\cdots ,\xi _{\kappa }  \right )\in V$,
and $H_{1} \left ( \xi   \right )$, $\cdots ,H_{\kappa _{1}+\kappa _{2} } \left ( \xi \right ) $, $H\left ( \xi  \right )$
be Nash functions on $V$,
$\mu,\mu _{1},\cdots ,\mu _{\kappa _{1} +\kappa _{2}}\in \mathbb{R}\setminus \left \{ 0 \right \}$
and $R$ be a multi-variable holomorphic rational function.
Assume that
\begin{equation}\label{E2.1}
\exp \left\{H^{\mu}(\xi)\right\}
=R\left(H_{1}^{\mu_{1}}(\xi), \cdots, H_{\kappa_{1}}^{\mu_{\kappa_{1}}}(\xi)\right)
\prod_{j=\kappa_{1}+1}^{\kappa_{1}+\kappa_{2}} H_{j}^{\mu_{j}}(\xi)  ,	
\end{equation}
then $H\left ( \xi  \right )$ must be a  constant on $V$.
\end{lemma}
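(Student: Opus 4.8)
The plan is to argue by contradiction: assume $H$ is non-constant on $V$ and derive a contradiction from \eqref{E2.1} by comparing the analytic behaviour of the two sides near a singular point. First I would reduce to one complex variable. Since $H$ is non-constant, there is a point of $V$ and a complex line through it along which the restriction of $H$ is non-constant; restricting every function in \eqref{E2.1} to an open subset of this line and invoking Lemma \ref{Tw1990}(2), all of $H, H_1, \dots, H_{\kappa_1+\kappa_2}$ become one-variable Nash functions, the identity \eqref{E2.1} persists, and it now suffices to treat the case $\kappa=1$, with $V\subset\mathbb{C}$ connected and open and $H$ a non-constant one-variable Nash function.

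Next I would exploit that a non-constant one-variable Nash function is algebraic, hence extends by analytic continuation to a (generally multivalued) algebraic function on $\mathbb{P}^1$ with only finitely many branch points and poles; realized on its compact connected Riemann surface it is a non-constant holomorphic map to $\mathbb{P}^1$ and is therefore surjective. Consequently $H$ attains the value $\infty$ (when $\mu>0$) or $0$ (when $\mu<0$) at some point $\xi^{\ast}$, which can be reached along a path lying in the common domain of analyticity of all the functions occurring in \eqref{E2.1}. By the identity theorem, once every branch in \eqref{E2.1} is continued consistently along this path, the identity \eqref{E2.1} remains valid on a punctured neighbourhood of $\xi^{\ast}$.

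The heart of the argument is a local singularity comparison in a Puiseux coordinate $\tau$ centred at $\xi^{\ast}$ (so that $\xi-\xi^{\ast}=\tau^{N}$). On the right-hand side each $H_j$ is algebraic, so $H_j^{\mu_j}\sim a_j\tau^{m_j\mu_j}$ for real exponents $m_j\mu_j$, and the rational function $R$ of these quantities again has a pure power-law leading term; hence the right-hand side obeys a moderate (algebraic) growth estimate of the form $|\,\cdot\,|\asymp|\tau|^{e}$ for some real $e$. On the left-hand side $H\sim b\,\tau^{-k}$ with $k>0$ (a pole of $H$, or a zero of $H$ when $\mu<0$), so $H^{\mu}\sim b^{\mu}\tau^{-k\mu}$; choosing the approach ray $\arg\tau$ so that $H^{\mu}$ becomes real and positive forces $\mathrm{Re}(H^{\mu})\to+\infty$, and hence $|\exp\{H^{\mu}\}|$ grows faster than any power of $|\tau|^{-1}$, i.e. $\xi^{\ast}$ is an essential singularity of the left-hand side. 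An essential singularity cannot coincide with a moderate-growth function near $\xi^{\ast}$, contradicting the continued identity. Therefore $H$ must be constant on $V$.

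I expect the main obstacle to be making this singularity comparison fully rigorous. Three points require care: ensuring the continuation path to $\xi^{\ast}$ stays in the common domain and that the chosen branches of the real powers $H^{\mu}$ and $H_j^{\mu_j}$ are transported consistently so that \eqref{E2.1} is preserved; selecting $\xi^{\ast}$ as a pole (resp. zero) of $H$ that is generic with respect to the finitely many branch points, poles and zeros of the $H_j$ and of $R$, so that the right-hand side retains its clean power-law leading behaviour; and choosing the approach ray so that $\mathrm{Re}(H^{\mu})\to+\infty$ while the bounded angular variation of the right-hand side does not interfere. Once these are arranged, the essential-versus-moderate dichotomy is robust and yields the contradiction.
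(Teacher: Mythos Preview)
The paper does not contain a proof of this lemma: it is quoted verbatim from \cite{Ma2024}, and the only comment the authors add is the sentence immediately following the statement, observing that the proof in \cite{Ma2024} allows some of the $\mu_j$ to vanish. There is therefore no in-paper argument to compare your proposal against.

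That said, your strategy is a reasonable and essentially standard one for results of this type. The reduction to one variable via Lemma~\ref{Tw1990}(2) is legitimate, and the core dichotomy you invoke---an algebraic expression in real powers of algebraic functions can have at worst power-type singularities in a Puiseux parameter, whereas $\exp$ of a function with a genuine pole produces an essential singularity along suitable rays---does force a contradiction once the identity is transported to a punctured neighbourhood of such a point. Your own list of caveats is accurate: you must (i) choose $\xi^{\ast}$ so that it is not a branch point of any $H_j$ and so that the path from $V$ to $\xi^{\ast}$ avoids the (finite) exceptional set consisting of branch points, zeros and poles of the $H_j$ and of $H$, and the indeterminacy locus of $R$; (ii) fix all branches of the real powers consistently by continuation along that path so that \eqref{E2.1} is preserved; and (iii) observe that the numerator and denominator of $R$ evaluated at the local Puiseux expansions are finite sums of monomials $\tau^{\text{real}}$, hence either vanish identically (which is excluded since the left-hand side is never zero) or have a well-defined leading exponent, so the right-hand side satisfies $|\text{RHS}|\asymp |\tau|^{e}$ along every ray. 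With these points made precise the argument goes through; none of them hides a genuine obstruction.
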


The proof of this lemma  in \cite{Ma2024}  shows that the condition `` $\mu _{1},\cdots ,\mu _{\kappa _{1} +\kappa _{2}}\in \mathbb{R}\setminus \left \{ 0 \right \}$ '' can be replaced by
``$\mu _{1},\cdots ,\mu _{\kappa _{1} +\kappa _{2}}\in \mathbb{R}$ and at least one of  them is non zero''.
The contradictory equation \eqref{E2.1} for  Nash power functions is the key in the proof of Theorem \ref{thm1}.
The following result was originally contained in  \cite{Huang2015} and was rewritten as a lemma in \cite{Cheng2023}.
\begin{lemma}\label{field}
\cite{Huang2015} \cite{Cheng2023}
Let $U$ be an open set in  $\mathbb{C}$, $0\in U$. Let $$S=\{h_1, \cdots, h_l\}:=\{f_1, \cdots, f_n, g_1, \cdots, g_m, g_{m+1}\}$$ be a set of holomorphic functions on $U$. Denote by $\Re$ and $\mathfrak{F} = \Re \left ( S \right ) $
the field of rational functions and the field extension over rational function on $U$ respectively. Let $r$ be the transcendence
degree of the field  $\mathfrak{F}/\Re $ and
$\left \{h _{1},\cdots ,h _{r} \right \} \subset S$ be a maximal algebraic independent subset over $\Re$.

$\bullet$ If $r=0$,  then all elements in S are Nash algebraic.

$\bullet$ If $r>0$, there are $\left \{h _{1},\cdots ,h _{r} \right \} \subset S$ be a maximal algebraic independent subset over $\Re$, thus $\left \{h_j, h _{1},\cdots ,h _{r} \right \} \subset S$ is algebraic dependent over $\Re$. By the  existence and uniqueness of the implicit function theorem, there exists $D\subset U$ with $0\in \overline{D}$ and holomorphic Nash algebraic
functions $\hat{h } _{j} \left ( z,X \right )$
defined in a neighborhood $\hat{D}$ of
$\left \{ \left ( z,\Phi \left ( z \right ) |z\in D \right )  \right \} \subset \mathbb{C}^{n} \times \mathbb{C}^{r} $,
such that
$$h_{j}(z)=\hat{h}_{j}(z, \Phi(z)), \quad \forall j=1, \cdots, l,$$
where  $\Phi(z)=\left ( h_{1}\left ( z \right ) ,\cdots , h_{r}\left ( z \right ) \right ).$
 \end{lemma}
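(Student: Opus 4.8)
The plan is to treat the two cases $r=0$ and $r>0$ separately. The first is essentially a restatement of the definition of a Nash function, while the second rests on the holomorphic implicit function theorem applied to the minimal polynomial of each $h_j$ over the function field generated by a transcendence basis. First, suppose $r=0$. Then $\mathfrak{F}/\Re$ is an algebraic extension, so every $h_j$ is algebraic over $\Re=\mathbb{C}(z)$. Writing out such an algebraic relation and clearing the denominators of its rational coefficients produces a nonzero polynomial $P_j\in\mathbb{C}[z,w]$ with $P_j(z,h_j(z))\equiv 0$ on $U$. Since $h_j$ is holomorphic and this single polynomial identity holds at every point, $h_j$ satisfies the defining condition of a Nash function at each $z_0\in U$; hence $h_j\in\mathcal{N}(U)$, which is the first bullet.

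Now suppose $r>0$. For an index $j$ with $h_j\in\{h_1,\dots,h_r\}$ we may simply take $\hat{h}_j(z,X)=X_j$, which is a polynomial and hence Nash, so fix $j$ with $h_j\notin\{h_1,\dots,h_r\}$. Because $\{h_1,\dots,h_r\}$ is a maximal algebraically independent subset, the set $\{h_j,h_1,\dots,h_r\}$ is algebraically dependent over $\Re$, so there is a nonzero polynomial in $W,X_1,\dots,X_r$ with coefficients in $\mathbb{C}(z)$ vanishing on $(h_j,h_1,\dots,h_r)$. I would take this relation to be the minimal polynomial of $h_j$ over $K:=\mathbb{C}(z)(h_1,\dots,h_r)\cong\mathbb{C}(z)(X_1,\dots,X_r)$ and clear denominators, producing a nonzero polynomial $\tilde{Q}_j\in\mathbb{C}[z,X_1,\dots,X_r,W]$ with
\[
\tilde{Q}_j\bigl(z;h_1(z),\dots,h_r(z),h_j(z)\bigr)\equiv 0\quad\text{on }U.
\]
Since the base field $K$ has characteristic zero it is perfect, so the minimal polynomial is separable; consequently $\partial\tilde{Q}_j/\partial W$ is coprime to $\tilde{Q}_j$ over $K$, and the holomorphic function
\[
\Delta_j(z):=\frac{\partial\tilde{Q}_j}{\partial W}\bigl(z;\Phi(z),h_j(z)\bigr)
\]
does not vanish identically on $U$.

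The key step is the passage from this algebraic relation to the Nash parametrization $\hat{h}_j$. Each $\Delta_j$ is holomorphic and not identically zero, hence has isolated zeros, and the same is true of the finite product $\prod_j\Delta_j$. Therefore I can choose a connected open set $D\subset U$ with $0\in\overline{D}$ on which no $\Delta_j$ vanishes. Picking a base point $z_*\in D$, where $W=h_j(z_*)$ is a simple root of $\tilde{Q}_j(z_*,\Phi(z_*),\cdot)$, the holomorphic implicit function theorem applied to $\tilde{Q}_j(z,X,W)=0$ yields a holomorphic solution $W=\hat{h}_j(z,X)$ on a neighborhood $\hat{D}$ of $\{(z,\Phi(z)):z\in D\}$ in $\mathbb{C}\times\mathbb{C}^r$, unique near that point and satisfying $\tilde{Q}_j(z,X,\hat{h}_j(z,X))\equiv 0$. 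By this local uniqueness together with the identity theorem on the connected set $D$, the solution agrees with $h_j$ along the graph, giving $h_j(z)=\hat{h}_j(z,\Phi(z))$; and because $\hat{h}_j$ is holomorphic and annihilated by the nonzero polynomial $\tilde{Q}_j$, it is a Nash function of $(z,X_1,\dots,X_r)$. Shrinking to a common $\hat{D}$ and $D$ valid for all finitely many indices $j$ completes the argument.

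I expect the main obstacle to be the bookkeeping around the base point and the domain $D$: the derivative $\partial\tilde{Q}_j/\partial W$ may well vanish at $0$ itself, which is precisely why the statement asserts only $0\in\overline{D}$ rather than $0\in D$, and one must verify that the implicit function theorem can be applied on a single domain accumulating at $0$ simultaneously for every $h_j$. The separability input guaranteeing $\Delta_j\not\equiv 0$ is the conceptual heart of the proof; once it is in hand, the remaining analytic steps are standard consequences of the implicit function theorem together with the closure properties of $\mathcal{N}$ recorded in Lemma \ref{Tw1990}.
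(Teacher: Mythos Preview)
The paper does not supply its own proof of this lemma: it is stated with citations to \cite{Huang2015} and \cite{Cheng2023} and used as a black box. Your argument is exactly the standard one underlying those references---transcendence basis, minimal polynomial, separability in characteristic zero to guarantee a nonvanishing $W$-derivative, then the holomorphic implicit function theorem---so there is nothing to compare and your sketch is correct.
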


\section{Main results}\label{sec3}
Let us  first investigate the K\"ahler potential functions of a  K\"ahler manifold $M$  with a real-analytic K\"ahler metric
$g_{M}$.
Let $\psi$ be a   K\"ahler potential  function on  $ U\subset M$. There exists  a local coordinate system $z$ on
a neighbourhood $U$ of a point $p \in M$,   such that $\psi (z):U\rightarrow \mathbb{R}$
can be analytically continued to an open neighbourhood
$W \subset U \times \text{conj}(U)$,
where $\text{conj}(U)=\{w\in \mathbb{C}^{n}| \overline{w} \in U\}$.
This extension function is called a polarization function of $\psi$ on $U$, and we denote it  by $\psi(z,w)$.
Obviously, it is unique and $\psi(z)=\psi(z, \bar z)$.

 By removing the pure terms that only contain $z$ or $w$, the following lemma describes a usual method  for constructing  a new K\"ahler potential function and its polarization function  from the old one (see also the proof of Lemma 4.1 in \cite{Huang2014}).

\begin{lemma}\label{lemk}
 Let $M$ be a complex manifold  with a real-analytic K\"ahler metric
$g_{M}$.
Denote by  $\psi$ an arbitrary  K\"ahler potential function of $g_M$
in a local coordinate system $(U,z)$ around  $p\in M$.
Then there is an another K\"ahler potential function $\varphi$ such that its
polarization function $\varphi$ satisfying $\varphi(z, \bar z_{0})=0$ (i.e., $(\exp\varphi)(z, \bar z_{0})=1$),
where  $z_0$ denotes   the  coordinate of $p$ and $(\exp\varphi)(z, w)$ is the polarization function of $\exp(\varphi(z))$.

Moreover,
if  the polarization function $\psi(z,  w)$ (resp.  $\phi(z,w):=(\exp \psi)(z, w)$) is holomorphic Nash algebraic  in $(z,w)\in U \times \text{conj}(U)$,
then
\begin{description}
  \item[(i)] The  polarization function $\varphi(z, w)$ (resp. $(\exp \varphi)(z,w)$) is  holomorphic Nash algebraic   in $(z, w)\in U \times \text{conj}(U)$.
  \item[(ii)] The polarization function of $\det(\psi_{i\bar{j}}(z))_{1\leq i, j\leq m}$ is
$$\det(\frac{\partial^{2}\psi(z,w)}{\partial z_{i}\partial {w_{j}}}),$$
and it is a holomorphic Nash algebraic function in $(z, w)\in U \times \text{conj}(U)$.
\end{description}
\end{lemma}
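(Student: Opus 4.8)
The plan is to build $\varphi$ by subtracting from $\psi$ the ``pure'' terms of its polarization at $p$, and then to propagate Nash-algebraicity through the resulting formula using the closure properties in Lemma~\ref{Tw1990}. After shrinking $U$ around $p$ if necessary we may assume $\psi(z,w)$ is holomorphic on all of $U\times\text{conj}(U)$ and that $(z,\bar z_{0})$, $(z_{0},w)$ lie in this domain for all relevant $z,w$. Set
\[
\varphi(z,w):=\psi(z,w)-\psi(z,\bar z_{0})-\psi(z_{0},w)+\psi(z_{0},\bar z_{0}),\qquad \varphi(z):=\varphi(z,\bar z).
\]
Since $\psi$ is real, the polarization obeys $\overline{\psi(z,w)}=\psi(\bar w,\bar z)$ (read off the bihomogeneous Taylor expansion about $(z_{0},\bar z_{0})$), so $\psi(z_{0},\bar z)=\overline{\psi(z,\bar z_{0})}$ and thus $\varphi(z)=\psi(z)-\mathrm{Re}\bigl(2\psi(z,\bar z_{0})-\psi(z_{0})\bigr)$, i.e.\ $\varphi$ differs from $\psi$ by the real part of a function holomorphic in $z$. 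Hence $\varphi$ is real-analytic, $\sqrt{-1}\partial\bar\partial\varphi=\sqrt{-1}\partial\bar\partial\psi=\omega_{M}$, so $\varphi$ is again a K\"ahler potential of $g_{M}$, and by uniqueness $\varphi(z,w)$ is its polarization. Substituting $w=\bar z_{0}$ in the displayed formula gives $\varphi(z,\bar z_{0})\equiv0$, whence $(\exp\varphi)(z,\bar z_{0})\equiv1$; and $(\exp\varphi)(z,w)=e^{\varphi(z,w)}$ is the polarization of $\exp(\varphi(z))$ because $\exp$ is entire.

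For part~(i) I would read Nash-algebraicity off this formula. If $\psi(z,w)\in\mathcal N(U\times\text{conj}(U))$, then $\psi(z,\bar z_{0})$ is a Nash function of $z$ by Lemma~\ref{Tw1990}(2), hence a Nash function of $(z,w)$ by Lemma~\ref{Tw1990}(3) (it is holomorphic on the product and Nash in $z$ for each fixed $w$); likewise $\psi(z_{0},w)\in\mathcal N$, while $\psi(z_{0},\bar z_{0})$ is constant, so $\varphi(z,w)\in\mathcal N(U\times\text{conj}(U))$ by Lemma~\ref{Tw1990}(1). For the exponential version the same identity yields
\[
(\exp\varphi)(z,w)=\phi(z,w)\,\phi(z_{0},\bar z_{0})\,\bigl(\phi(z,\bar z_{0})\,\phi(z_{0},w)\bigr)^{-1},
\]
and since $\phi=e^{\psi}$ is nowhere zero, the right-hand side lies in $\mathcal N(U\times\text{conj}(U))$ by Lemma~\ref{Tw1990}(1)--(3) whenever $\phi(z,w)$ does.

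For part~(ii) I would first note that polarization commutes with $\partial/\partial z_{i}$ and carries $\partial/\partial\bar z_{j}$ to $\partial/\partial w_{j}$ (immediate from the Taylor expansion of $\psi$ about $(z_{0},\bar z_{0})$), so the polarization of $\psi_{i\bar j}(z)$ is $\partial^{2}\psi(z,w)/\partial z_{i}\partial w_{j}$; since $\det$ is a polynomial with integer coefficients in the matrix entries and polarization is compatible with sums and products, the polarization of $\det(\psi_{i\bar j}(z))_{1\le i,j\le m}$ is $\det\bigl(\partial^{2}\psi(z,w)/\partial z_{i}\partial w_{j}\bigr)$. Nash-algebraicity then follows from Lemma~\ref{Tw1990}(1): if $\psi(z,w)$ is Nash, so are its iterated partials and hence the determinant; if instead only $\phi=e^{\psi}$ is known to be Nash, write $\partial_{z_{i}}\partial_{w_{j}}\psi=\partial_{z_{i}}\bigl((\partial_{w_{j}}\phi)/\phi\bigr)$, which is obtained from the Nash functions $\phi$ and $\partial_{w_{j}}\phi$ by field operations and differentiation with $\phi\neq0$, so the entries---and therefore the determinant---are Nash.

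I expect the only genuinely delicate point to be this last case of part~(ii): when just $\phi=e^{\psi}$ is assumed Nash, the potential $\psi(z,w)=\log\phi(z,w)$ is \emph{not} itself Nash, so one must descend to the mixed derivative $(\partial_{w_{j}}\phi)/\phi$ before invoking Lemma~\ref{Tw1990}(1). The remaining steps are bookkeeping with polarizations and the ring, quotient, restriction and differentiation closure of $\mathcal N$; a minor technicality is the initial shrinking of $U$ so that $\psi(z,w)$ and the sampled points $(z,\bar z_{0})$, $(z_{0},w)$ share a common domain of holomorphy.
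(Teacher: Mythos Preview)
Your proposal is correct and follows essentially the same route as the paper: you define $\varphi$ by subtracting from $\psi$ its polarization values at $(z,\bar z_{0})$, $(z_{0},w)$, $(z_{0},\bar z_{0})$, and then propagate Nash-algebraicity through the resulting product/quotient formula for $(\exp\varphi)$ and through the derivative formula $\partial_{z_i}\partial_{w_j}\log\phi$ for the mixed Hessian, exactly as the paper does. Your write-up is slightly more explicit in citing Lemma~\ref{Tw1990} and in justifying that $\varphi$ remains a K\"ahler potential, but the argument is the same.
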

\begin{proof}
Let $\psi(z,w)$ be the polarization function of  $\psi$ on $U \times \text{conj}(U)$.
Define
\begin{equation}\label{2}
\varphi(z,w)
=\psi(z,w)-\psi(z, \bar z_{0})-\psi(z_{0}, w)+\psi(z_{0}, \bar z_{0}).
\end{equation}
Then $\varphi(z)=\varphi(z,\bar{z})$ is a new K\"ahler potential on $U$ satisfying $\varphi(z, \bar z_{0})=0$.
Obviously,
$\varphi(z,w)$ is the polarization function of $\varphi(z)$.
Denote by  $(\exp\psi)(z,w)$  the polarization function of  $\exp(\psi(z))$.
Since $(\exp\varphi)(z,w)=\exp(\varphi(z, w))$,
we know that $\varphi(z, \bar z_{0})=0$ is equivalent to $(\exp\varphi)(z,\bar z_{0})=\exp(\varphi(z, \bar z_{0}))=1$.

\begin{description}
  \item[(i)] According to  the properties of holomorphic Nash algebraic function in Section \ref{sec2},
conclusion (i) follows from  the definition of $\varphi(z,w)$ and the following observation
\begin{eqnarray*}
(\exp \varphi)(z,w)&=&\exp(\varphi(z,w))\\
&=&\frac{\exp(\psi(z,w))\exp(\psi(z_{0}, \bar z_{0}))}{\exp(\psi(z, \bar z_{0}))\exp(\psi(z_{0}, w))}
=\frac{(\exp\psi)(z,w)(\exp\psi)(z_{0}, \bar z_{0})}{(\exp\psi)(z, \bar z_{0})(\exp\psi)(z_{0}, w)}.
 \end{eqnarray*}
  \item[(ii)] For conclusion (ii),
 we have
$\psi_{i\bar{j}}(z)=\frac{\partial^{2}\psi(z, \bar z)}{\partial z_{i}\partial \bar{z}_{j}}
=\frac{\partial^{2}\psi(z,w)}{\partial z_{i}\partial {w_{j}}}|_{w=\bar{z}}$ on  $U \times \text{conj}(U)$.
By the uniqueness, we obtained that the polarization function of $\psi_{i\bar{j}}(z)$ is $\frac{\partial^{2}\psi(z,w)}{\partial z_{i}\partial {w_{j}}}$.
This implies that
the polarization function of $\psi_{i\bar{j}}(z)$ is
  holomorphic Nash algebraic if $\psi(z,w)$ is.

Notice that  $\phi(z,w)=(\exp \psi)(z, w)=\exp (\psi(z, w))$.
Then
$\psi(z)=\log \phi(z,z)$.
Hence, we have
$\psi_{i\bar{j}}(z)=\frac{\partial^{2}\log\phi(z, w)}{\partial z_{i}\partial w_{j}}|_{w=\bar z}$.
 The polarization function of $\psi_{i\bar{j}}(z)$ is
$$\frac{\partial^{2}\log(\phi(z,w))}{\partial z_{i}\partial {w_{j}}}
=-\frac{1}{\phi^{2}(z,w)}\frac{\partial\phi(z,w)}{\partial z_{i}}\frac{\partial\phi(z,w)}{\partial w_{j}}+\frac{1}{\phi(z,w)}\frac{\partial^{2}\phi(z,w)}{\partial z_{i}\partial {w_{j}}}.$$
This also implies that
the polarization function of $\psi_{i\bar{j}}(z)$ is
  holomorphic Nash algebraic if  $\phi(z,w)$  is.
This shows that
  the polarization function of $\det(\psi_{i\bar{j}}(z))$
is the function $
\det(\frac{\partial^{2}\psi(z,w)}{\partial z_{i}\partial {w_{j}}})$ in $ U \times \text{conj}(U)$. And  it is holomorphic Nash algebraic if $\psi(z,w)$ or $\phi(z,w)=(\exp \psi)(z, w)$  is.
\end{description}
\end{proof}

\begin{definition}\label{def EN}
Let $M$ be a K\"ahler manifold  with a real-analytic K\"ahler metric
$g_{M}$.
For any point $p \in M$, if there exists  a local
coordinate neighbourhood $U\subset M$,
and  a K\"ahler potential function $\psi$ of $g_{M}$ in $U\subset M$,
such that the polarization function of $\psi$ (resp. $\exp\psi$) is a holomorphic Nash algebraic function on $U\times \text{conj}(U)$,
we say  $g_{M}$ is Nash-algebraic (exponent Nash-algebraic).
And $(M, g_{M})$ is called   a Nash-algebraic K\"ahler manifold (exponent Nash-algebraic  K\"ahler manifold).
\end{definition}

\begin{lemma}\label{prop1.3}
Let $(M, g_M)$ be a K\"ahler manifold and suppose that $\omega'_{M}:=\alpha\mathrm{Ric}(\omega_{M})+\beta\omega_M$  induces a K\"ahler metric $g'_M$ on $M$.
For any $\mu\in \mathbb{R}^{+}$,  $(M,  \mu g'_M)$ and the complex Euclidean space do not have common K\"ahler submanifolds if $\omega_M$ is exponent Nash-algebraic.
\end{lemma}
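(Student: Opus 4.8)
The plan is to follow the strategy of Cheng and the second author \cite{Cheng2021}, who prove the case $\alpha=0,\ \beta=1$, while tracking the contribution of the Ricci form. The key point is that after twisting $\omega_{M}$ by $\alpha\,\mathrm{Ric}(\omega_{M})$, the relevant \emph{exponential} K\"ahler potential ceases to be Nash algebraic and becomes instead a product of \emph{real powers} of Nash algebraic functions; this is exactly the level of generality that Lemma \ref{Ma2024} is built to absorb, so the same machinery still applies.

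One argues by contradiction. If $(M,\mu g'_{M})$ and $(\mathbb{C}^{n},g_{\mathbb{C}^{n}})$ shared a common K\"ahler submanifold, then restricting to a local holomorphic curve through one of its points would produce a connected open set $V\subset\mathbb{C}$ with $0\in V$ and holomorphic maps $F:V\to\mathbb{C}^{n}$, $G:V\to M$ with $F(0)=0$, $G(0)=p$, with $F$ an immersion (hence nonconstant) and $F^{*}\omega_{\mathbb{C}^{n}}=\mu\,G^{*}\omega'_{M}$ on $V$; it then suffices to show $F$ is constant. Fix a coordinate $z$ near $p$ and, since $\omega_{M}$ is exponent Nash-algebraic, a K\"ahler potential $\psi$ of $\omega_{M}$ on a neighbourhood $U$ of $p$ whose exponential polarization $\phi(z,w):=(\exp\psi)(z,w)$ is Nash algebraic. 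From $\mathrm{Ric}(\omega_{M})=-\sqrt{-1}\,\partial\bar\partial\log\det(\psi_{i\bar j})$ one reads off that
\[
\psi':=\beta\psi-\alpha\log\det(\psi_{i\bar j})
\]
is a K\"ahler potential of $\omega'_{M}$ on $U$; by Lemma \ref{lemk}(ii) the polarization of $\det(\psi_{i\bar j})$ is a Nash algebraic function $D(z,w)$, so the polarization of $\exp\psi'$ is $\phi(z,w)^{\beta}D(z,w)^{-\alpha}$, a product of real powers of Nash algebraic functions. Moreover $(\alpha,\beta)\neq(0,0)$, since $\omega'_{M}$ is a genuine K\"ahler form.

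The analytic core is the polarization/diastasis step, carried out as in \cite{Huang2015,Cheng2021}. Since $F^{*}\omega_{\mathbb{C}^{n}}=\mu\,G^{*}\omega'_{M}$, the function $\|F(z)\|^{2}-\mu\,\psi'(G(z))$ is pluriharmonic on a simply connected shrinking of $V$, hence of the form $h(z)+\overline{h(z)}$; polarizing this real-analytic identity in $(z,\bar z)\mapsto(z,w)$, specializing the holomorphic and antiholomorphic variables to eliminate $h$, and normalizing via Lemma \ref{lemk} produces, near $(0,0)\in\mathbb{C}^{2}$, the identity of holomorphic functions
\[
\sum_{i=1}^{n}F_{i}(z)\,\overline{F_{i}(\bar w)}=\mu\,\varphi'\!\bigl(G(z),\overline{G(\bar w)}\bigr),
\]
where $\varphi'$ is the normalized polarization of $\psi'$; by the construction in Lemma \ref{lemk}, $(\exp\varphi')(z',w')$ is again a product of real powers of the Nash algebraic functions $\phi$ and $D$ evaluated at $(z',w')$, $(z',\bar z_{0})$, $(z_{0},w')$ (the exponents lying in $\{\pm\alpha,\pm\beta\}$) times a constant. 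Exponentiating,
\[
\exp\!\Bigl(\sum_{i=1}^{n}F_{i}(z)\,\overline{F_{i}(\bar w)}\Bigr)=\bigl[(\exp\varphi')\bigl(G(z),\overline{G(\bar w)}\bigr)\bigr]^{\mu}.
\]

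Finally one brings in the field-theoretic machinery. Let $\mathfrak{F}$ be the field generated over the rational functions in $(z,w)$ by the components of $F(z),\overline{F(\bar w)},G(z),\overline{G(\bar w)}$ (together with enough of their derivatives); Lemma \ref{field} furnishes a transcendence basis $h_{1},\dots,h_{r}$ and Nash algebraic functions expressing these components as their values at $X=\Phi(z,w):=(h_{1},\dots,h_{r})$. Composing with $\phi$ and $D$ (Lemma \ref{Tw1990}(2),(4)) and, where needed, differentiating (Lemma \ref{Tw1990}(1)), and invoking the now-standard argument (cf.~\cite{Huang2015,Cheng2021,Ma2024}) that converts such on-graph identities into identities in the free variables, one rewrites the last display as
\[
\exp\bigl(H(z,w,X)\bigr)=R\prod_{j}N_{j}(z,w,X)^{\nu_{j}}
\]
with $H$ and the $N_{j}$ Nash algebraic in $(z,w,X)$, $R$ constant, each $\nu_{j}\in\{\pm\mu\alpha,\pm\mu\beta\}$ and at least one $\nu_{j}\neq0$ (because $(\alpha,\beta)\neq(0,0)$ and $\mu\neq0$). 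This is the hypothesis of Lemma \ref{Ma2024}, in the strengthened form noted just after it, so $H$ is constant; hence $\sum_{i}F_{i}(z)\overline{F_{i}(\bar w)}$ is constant and, evaluating at $z=0$, identically zero, so $\|F(z)\|^{2}\equiv0$ and $F$ is constant — the required contradiction. I expect the last paragraph to be the main obstacle: one must arrange $\mathfrak{F}$ to be finitely generated of finite transcendence degree and closed under the operations used, and carefully justify the passage from the on-graph identity to an identity in the free variables so that Lemma \ref{Ma2024} applies. By contrast, the identification of $\psi'$ and the bookkeeping with the exponents in the middle two paragraphs are routine given Lemma \ref{lemk}.
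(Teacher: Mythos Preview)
Your argument is correct in outline, but it is not the route the paper takes for this lemma. Both proofs share the opening moves: reduce to a holomorphic curve, choose a potential $\varphi$ with Nash-algebraic exponential polarization $\phi=(\exp\varphi)$, recognise $-\alpha\log\det(\varphi_{i\bar j})+\beta\varphi$ as a potential for $\omega'_{M}$, polarize, and normalize via Lemma~\ref{lemk} (applied separately to $\phi$ and to $H:=\det(\varphi_{i\bar j})$) so as to kill the pluriharmonic remainder and arrive at
\[
\sum_{j} f_{j}(s)\bar f_{j}(t)=\log\!\bigl(\widetilde H^{-\mu\alpha}(G_{1}(s),\bar G_{1}(t))\,\phi^{\mu\beta}(G_{1}(s),\bar G_{1}(t))\bigr),
\]
with $\widetilde H$ and $\phi$ Nash algebraic. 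At this point the paper does \emph{not} invoke the transcendence-degree machinery of Lemma~\ref{field} and Lemma~\ref{Ma2024}. Instead it appeals directly to Theorem~2.1 of \cite{Zhang2023} on the Umehara algebra: the left side $\sum_{j}|f_{j}|^{2}$ always lies in the class $\widetilde\Lambda_{p}$, whereas the logarithm of a product of real powers of Nash-algebraic functions cannot lie in $\widetilde\Lambda_{p}\setminus\mathbb{R}_{p}$; this yields the contradiction in a single line.

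Your route---running Lemma~\ref{field} to pass to free variables and then applying Lemma~\ref{Ma2024}---is exactly the machinery the paper reserves for the proof of Theorem~\ref{thm1}, where the fibre variable and the function $u$ make the Umehara-algebra shortcut unavailable. For the present lemma the \cite{Zhang2023} black box is much shorter; your approach is heavier but self-contained within the lemmas the paper states, and it is good preparation for Theorem~\ref{thm1}. One caution on your last paragraph: in the paper's version of this argument the ``on-graph to free variables'' step is carried out by field-extending only in $s$ and then Taylor-expanding in $t$, so that each coefficient $\Psi_{\tau}(s,X,0)$ is genuinely Nash algebraic and the algebraic-independence argument applies termwise. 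If you instead field-extend in both variables simultaneously, the displayed identity involves $\exp$ and real powers and is not itself Nash; you must first differentiate to reduce to Nash identities before invoking algebraic independence, and only then integrate back to put yourself in the hypotheses of Lemma~\ref{Ma2024}.
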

\begin{proof}
Suppose that  $(\mathbb{C}^{n}, g _{\mathbb{C}^{n}})$ and   $(M, \mu g'_M)$ have common K\"ahler submanifolds. Then
they have an one dimensional complex submanifold $V\subset \mathbb{C}$,
holomorphic mappings  $F:V\to \mathbb{C}^{n}$ and   $G_{1}:V\to M$
 such that
\begin{equation}\label{0}
 	F^{*} \omega _{\mathbb{C}^{n}  } = \mu G_{1}^* \omega'_{M} ~~on ~~V.
\end{equation}
Without loss of generality, assume that $0\in V$, $F=(f_1, \cdots, f_n): V\rightarrow \mathbb{C}^n$, and $G_1=(g_1, \cdots$ $, g_m): V\rightarrow U\subset M$ are holomorphic mappings, such that $F(0)=0$ and $G_1(0)=0$, where  $(U, z)$ is a local coordinate system containing $G_1(V)$.
By Lemma \ref{lemk}, there always exists a K\"ahler potential function $\varphi$ of    $\omega_M$  on $U$ such that its  polarization function $\varphi$ satisfying $\varphi(z, 0)=0$ (i.e., $\phi(z,0):=(\exp\varphi)(z, 0)=1$).
By \eqref{0}, we have
\begin{align*}
\partial\bar{\partial}(\sum_{j=1}^n|f_j(s)|^2)&=-\mu\alpha\partial \bar\partial\log\det(\varphi_{i\bar{j}}(G_1(s)))+\mu\beta\partial\bar{\partial}\log\phi(G_1(s)).
\end{align*}
Get rid of  $\partial\bar{\partial}$, there exists a holomorphic function $\mathfrak{a}_{0}(s)$  on $V$ such that
\begin{align*}
\sum_{j=1}^n|f_j(s)|^2&=-\mu\alpha\log\det(\varphi_{i\bar{j}}(G_1(s)))+\mu\beta\log\phi(G_1(s))+\mathfrak{a}_{0}(s)+\overline{\mathfrak{a}_{0}(s)}.
\end{align*}
By polarizing, we get
\begin{align*}
\sum_{j=1}^n f_j(s)\bar{f}_j(t)=&-\mu\alpha\log\det(\varphi_{i\bar{j}}(G_1(s),\bar {G}_1(t)))+\mu\beta\log\phi(G_1(s), \bar {G}_1(t))+\mathfrak{a}_{0}(s)+\bar{\mathfrak{a}}_{0}(t),
\end{align*}
where $(s, t)\in V \times \mathrm{conj}(V)$, $\mathrm{conj}(V)=\{s\in\mathbb{C}: \bar{s}\in V\}$,
$ \bar{\mathfrak{a}}_0(t)=\overline{\mathfrak{a}_0(\bar t)}$, $\bar{f}_j(t)=\overline{f_j(\bar{t})} (1\leq j\leq n)$, $\bar{g}_i(t)=\overline{g_i(\bar{t})} (1\leq i\leq m)$.
Define
 $H(z,w)=\det(\varphi_{i\bar{j}}(z,w))_{1\leq i, j\leq m}$ and $\widetilde{H}(z,w)=\frac{H(z,w)H(0,0)}{H(0,w)H(z,0)}$.
Then there exists a holomorphic function $\mathfrak{a}_{1}(s)$ such that
\begin{align*}
\sum_{j=1}^n f_j(s)\bar{f}_j(t)
=-\mu\alpha\log \widetilde{H}(G_1(s),\bar {G}_1(t)) +\mu\beta\log\phi(G_1(s), \bar {G}_1(t))+\mathfrak{a}_{1}(s)+\overline{\mathfrak{a}}_{1}(t).
\end{align*}
Since $\widetilde{H}(z,0)=1$, and  $\phi(z, 0)=1$, we get $\overline{\mathfrak{a}}_{1}(t)+\mathfrak{a}_{1}(0)=0$,
$\mathfrak{a}_{1}(s)+\overline{\mathfrak{a}}_{1}(0)=0$ and
$\mathfrak{a}_{1}(0)+\overline{\mathfrak{a}}_{1}(0)=0$
by taking $s=0$ or $t=0$ or $s=t=0$ respectively.
This implies that $\mathfrak{a}_{1}(s)+\overline{\mathfrak{a}}_{1}(t)=0$ and
\begin{align}\label{a}
\sum_{j=1}^n f_j(s)\bar{f}_j(t)
=&\log \left(\widetilde{H}^{-\mu\alpha}(G_1(s),\bar {G}_1(t)) \phi^{\mu\beta}(G_1(s), \bar {G}_1(t))\right).
\end{align}
Notice that $\widetilde{H}(z,w)$ and $\phi(z,w)$ are Nash algebraic.
By Theorem 2.1 in \cite{Zhang2023}, for $p\in V$, we have
\begin{equation}\label{b}
\sum_{j=1}^n |f_j(s)|^{2} \in \widetilde{\Lambda}_{p}, \ \text{and} \ \ \log\left(\widetilde{H}^{-\mu\alpha}(G_1(s),\bar {G}_1(s)) \phi^{\mu\beta}(G_1(s), \bar {G}_1(s))\right)\notin \widetilde{\Lambda}_{p}\backslash \mathbb{R}_{p},
\end{equation}
where
$\displaystyle\widetilde{\Lambda}_{p}=\left\{a_{0}+\sum_{j=1}^{\kappa}a_{j}|\chi_{j}|^{2}\big| \chi_{j}\in \mathcal{O}_{p}, \chi_{j}(p)=0, a_{j}\in \mathbb{R}, \kappa\in \mathbb{Z}^{+}\right\}$,
$\mathcal{O}_{p}$ denotes the local ring of germs of
holomorphic functions at $p$ and
$\mathbb{R}_{p}$ denotes the germs of real numbers
(See Section 2.1 in \cite{Zhang2023} for the details).
It  is a conflict.
\end{proof}

Before giving the proof of Theorem \ref{thm1}, we recall the definitions of the disc bundle,  the ball bundle  and their  ansatzes.
 Let  $\pi:(L, h)\rightarrow M$ be a Hermitian line bundle over
 a K\"ahler manifold $(M, g_{M})$ of dimension $m$ satisfying  $\sqrt{-1}\partial \bar\partial\log h=l\omega_{M}$, $l\in \mathbb{R}$.
 The disc bundle is given by
 \begin{equation}\label{D1}
 D(L) := \{v \in L : |v|_{h} < 1\}.
 \end{equation}

 For local
coordinates $(U, z)$ of $M$ and a natural local free frame $\{e_{U}\}$ on $U$,  there exists a system of linear holomorphic coordinates $(\xi)$ on each fiber of $\pi^{-1}(U)$,
 so  $(z, \xi)$ is a holomorphic coordinate system of $\pi^{-1}(U)$.
One can locally represent the Hermitian structure on $L$ by a positive function $h(z)$  on $U$ such that its Hermitian form can be expressed as
$|v|_{h}^{2}=h(z)\xi\bar{\xi}$, and denoted by $x$ for brevity.
Let $u$ be a smooth real-valued  function on $[0,+\infty)$.
Then the following  $(1, 1)$-form
\begin{equation}\label{womega1}
\omega_{D}=\alpha\pi^{*}(\mathrm{Ric}(\omega_M))+\beta\pi^{*}(\omega_M)+\sqrt{-1}\partial\bar\partial u(|v|_{h}^{2})
\end{equation}
is well defined on $D(L)$, where
 $\mathrm{Ric}(\omega_M) $ denotes the associated Ricci form
 $-\sqrt{-1}\partial \bar\partial \log \det(g^{M})$.

For any fixed $k\in \mathbb{Z}^{+}$, set
$(E_{k}, H_{k}) = (L, h)\oplus \cdots\oplus (L, h).$
There are $k$ copies of $(L, h)$ on the right hand side.
The ball bundle is defined by
 \begin{equation}\label{Bk}
 \mathbb{B}(E_{k}) := \{v \in E_{k} : ||v||_{H_{k}}^{2} < 1\},
 \end{equation}
where $||v||_{H_{k}}^{2}=\langle\xi,\xi\rangle h(z)=\sum_{j=1}^{k}\xi_{j}\bar\xi_{j}h(z)$.
Define an $(1,1)$-form on $E_{k}$ by
\begin{equation}\label{omega Dk}
\omega_{\mathbb{B}(E_{k})}:=\alpha\pi^{*}\mathrm{Ric}(\omega_{M})+\beta\pi^{*}(\omega_M)+\sqrt{-1}\partial\bar\partial u(||v||_{H_{k}}^{2}).
\end{equation}

\remark If $\alpha=0, \beta=1$, $\sqrt{-1}\partial \bar\partial\log h=l\omega_{M}$, then
$\omega_{\mathbb{B}(E_{k})}$ turns to be the well known Calabi's ansatz \cite{Calabi1979}.
It has been studied by many people   on the  complete  K\"ahler metrics
with various constant curvatures such as Ricci curvature or scalar curvature   \cite{Calabi1979, Hwang, Hao2023}.

\remark If $\alpha=-\frac{1}{m+k+1}, \beta=\frac{1}{m+k+1}$,  $\sqrt{-1}\partial \bar\partial\log h=\omega_{M}$,
then $\omega_{\mathbb{B}(E_{k})}$ is the $(1, 1)$-form introduced by Ebenfelt, Xiao and Xu \cite{Ebenfelt2023} to study  the existence of the complete K\"ahler-Einstein metric on ball bundles.

\begin{proof}[The proof of Theorem 1]
Suppose that there exists a  connected open subset $V\subset \mathbb{C}$,
and  non-constant holomorphic mappings $F:V\to \mathbb{C}^{n}$ and   $G:V\to \mathbb{B}(E_{k})$  such that the K\"ahler forms satisfying
\begin{equation}\label{1}
 	F^{*} \omega _{\mathbb{C}^{n}  } =\mu G^* \omega_{\mathbb{B}(E_{k})} ~~on ~~V
\end{equation}
for a real constant $\mu$.

Let $U\subset M$ be a local
coordinate neighbourhood such that $G(V)\subset \mathbb{B}(E_{k})\cap\pi^{-1}(U)$ (shrink $V$ if needed).
Without loss of generality, assume that $0\in V$ and $F=(f_1, \cdots, f_n): V\rightarrow \mathbb{C}^n$, $G=(G_1,G_{2})=(g_1, \cdots, g_m, g_{m+1},\cdots, g_{m+k}): V\rightarrow \pi^{-1}(U)\cap \mathbb{B}(E_{k})$ are holomorphic mappings, such that $F(0)=0$ and $G(0)=(z_{0}, \xi_0)$, since $\pi^{-1}(U)$ is locally homeomorphic to $U\times\mathbb{C}^{k}$.

Since $(M, g_{M})$ is  exponent Nash-algebraic, there
exists a K\"ahler potential function $\psi$ of $g_{M}$ in $U\subset M$,
such that the polarization function of  $\exp\psi$ is a holomorphic Nash algebraic function on $U\times \text{conj}(U)$.
By Lemma \ref{lemk}, we can replace $\psi$  by a new K\"ahler potential function $\varphi$
such that the  polarization function  $(\exp \varphi)(z,w)$ of $\exp (\varphi(z))$ and the  polarization function of $\det(\varphi_{i\bar{j}}(z))_{1\leq i, j\leq m}$ are  holomorphic Nash algebraic   in $(z, w)\in U \times \text{conj}(U)$
and also satisfies  $\exp\varphi(z, \bar z_{0})=1$.
According the assumption  $\sqrt{-1}\partial \bar\partial\log h=l\omega_{M}$, we have
$h(z)=|\mathrm{hol}|^{2}\exp (l\varphi(z))$.
By suitable choice for the local  free frame of $L$,
we can make $h(z)=\exp (l\varphi(z))$ in $U$ and its   polarization function
$h(z,w)=\exp (l\varphi(z,w))$  in $U \times \text{conj}(U)$.
Furthermore,
the polarization function of $x(z,\xi):=||v||^{2}_{H_{k}}=h(z)\|\xi\|^{2}$ is $\langle\xi, \zeta\rangle\exp (l\varphi(z, w))$.
In particular, we have
$
\langle\xi, \bar{\xi}_{0}\rangle\exp (l\varphi(z,\bar{z}_{0}))=\langle\xi, \bar{\xi}_{0}\rangle.
$

Define $\phi(z):=\exp(\varphi(z))$.
Then the K\"ahler potential $\varphi(z)=\log\phi(z)$ and
we have
$$\omega_{\mathbb{B}(E_{k})} =-\mu\alpha\partial \bar\partial\log\det(\varphi_{i\bar{j}}(z))+\mu\beta\partial \bar\partial\log\phi(z)+\mu\partial \bar\partial u(x(z,\xi)),$$
where $x(z,\xi)=\|\xi\|^{2}\phi^{l}(z).$
By the proof of Lemma \ref{prop1.3}, we know $G(V)\not\subset M$, i.e., $G_{2}\neq 0$.
By \eqref{1},   for any $s\in V$, we have
\begin{align*}
\partial\bar{\partial}(\sum_{j=1}^n|f_j(s)|^2)&=-\mu\alpha\partial \bar\partial\log\det(\varphi_{i\bar{j}}(G_1(s)))+\mu\beta\partial\bar{\partial}\log\phi(G_1(s))
+\mu\partial\bar{\partial}u(x(G(s))),
\end{align*}
where $x (G(s))=\phi^{l}(G_{1}(s))\|G_{2}(s)\|^{2}$.
Get rid of  $\partial\bar{\partial}$, there exists a holomorphic function $\mathfrak{a}(s)$  on $V$ such that
\begin{align*}
\sum_{j=1}^n|f_j(s)|^2=&-\mu\alpha\log\det(\varphi_{i\bar{j}}(G_1(s)))+\mu\beta\log\phi(G_1(s))\\
&+\mu u(x (G(s)))+\mathfrak{a}(s)+\overline{\mathfrak{a}(s)}~~\text{for}~~ s\in V.
\end{align*}
By polarizing, we get
\begin{align*}
\sum_{j=1}^n f_j(s)\bar{f}_j(t)=&-\mu\alpha\log\det(\varphi_{i\bar{j}}(G_1(s),\bar {G}_1(t)))_{1\leq i, j\leq m}+\mu\beta\log\phi(G_1(s), \bar {G}_1(t))\\
& +\mu u(x (G(s), \bar G(t)))+\mathfrak{a}(s)+\overline{\mathfrak{a}}(t),
\end{align*}
where $(s, t)\in V \times \mathrm{conj}(V)$, $\mathrm{conj}(V)=\{s\in\mathbb{C}: \bar{s}\in V\}$,
$ \overline{\mathfrak{a}}(t)=\overline{\mathfrak{a}(\bar t)}$, $\bar{f}_j(t)=\overline{f_j(\bar{t})} (1\leq j\leq n)$, $\bar{g}_i(t)=\overline{g_i(\bar{t})} (1\leq i\leq m+1)$,  and
$\bar G(t)=(\bar{G}_1(t), \bar{G}_{2}(t))= (\bar{g}_1(t), \cdots, \bar{g}_m(t), \bar{g}_{m+1}(t),\cdots,\bar{g}_{m+k}(t))$,
$x (G(s), \bar G(t))$ $=\phi^{l}(G_{1}(s), \bar G_{1}(t))<G_{2}(s), G_{2}(t)>$.

Consider the set $S=\{f_{1},\cdots,f_{n},g_{1},\cdots,g_{m+1},\cdots,g_{m+k}\}$.
By Lemma \ref{field}, there exist Nash functions $\widehat{f}_{1},\cdots,\widehat{f}_{n}$, $\widehat{g}_{1},\cdots,\widehat{g}_{m+1},\cdots,\widehat{g}_{m+k}$ on
a neighbourhood $\widehat{V}$ in $\mathbb{C}^{n}\times \mathbb{C}^{r}$
such that $f_{j}(z)=\widehat{f}_{j}(z,h_{1}(z),\cdots,$ $h_{r}(z))$ and
$g_{j}(z)=\widehat{g}_{j}(z,h_{1}(z),\cdots, h_{r}(z))$, where
$\{h_{1},\cdots, h_{r}\}$ is a maximal algebraic independent subset over $\Re$.

Define $\widehat{G}(s,X):=(\widehat{G}_1(s,X),\hat{G}_{2}(s, X))$=$(\hat{g}_1(s, X), \cdots, \hat{g}_m(s, X),\hat{g}_{m+1}(s, X),\cdots,\hat{g}_{m+k}(s, X))$
on $\widehat{V}$, and
\begin{align}\label{Psi1}
\Psi(s, X, t)\nonumber
:=&\sum_{j=1}^n\hat{f}_j(s, X)\bar{f}_j(t)+\mu\alpha\log\det(\varphi_{i\bar{j}}(\widehat{G}_1((s, X),\bar {G}_1(t))))
\nonumber\\
&-\mu\beta\log\phi(\widehat{G}_1(s,X), \bar{G}_1(t))-\mu \log[\exp (u(x (\widehat{G}(s,X), \bar G(t))))]-\mathfrak{a}(s)-\overline{\mathfrak{a}}(t),
\end{align}
and
\[\Psi_\tau(s, X, t):=\frac{\partial^\tau\Psi}{\partial t^\tau}(s, X, t),\]
where $(s, X, t)\in \widehat{V}\times\overline{V}$, $X=(X_1, \cdots, X_r)$, $\tau\ge1$.

We now prove that   $\Psi(s, X, t)=\Psi(s, X, 0)$ for any $t$ near $0$  and any $(s, X)\in\widehat{V}$.
Actually, if it is not true, then
$\Psi(s, X, t)=\Psi(s, X, 0)+\sum_{\tau\ge1}\Psi_\tau(s, X, 0)t^\tau$
in a neighbourhood of $t=0$ since  $\Psi(s, X, t)$ is complex analysis in $t$.

Notice that $\hat{f}_1(s, X), \cdots, \hat{f}_n(s, X)$, $\hat{g}_1(s, X), \cdots, \hat{g}_m(s, X),\cdots, \hat{g}_{m+k}(s, X)$ are all Nash algebraic
functions in $(s, X)\in\widehat{V}$,
some tedious manipulation yields $\Psi_\tau(s, X, 0)$ is also Nash algebraic in $(s, X)$. Suppose that $\Psi_\tau(s, X, 0)$ is not constant, there exists $P(s, X, y)=A_d(s, X)y^d+\cdots+A_0(s, X)$, $A_0(s, X)\neq0$, such that $P(s, X, \Psi_\tau(s, X, 0))\equiv0$. For any $s\in V$, the equation $\Psi(s, h_1(s), \cdots, h_r(s), t)\equiv0$ implies that $\Psi_\tau(s, h_1(s), \cdots, h_r(s), 0)\equiv0$. Hence, we can obtain that $A_0(s, h_1(s), \cdots, h_r(s))\equiv0$. It contradicts with  $\{h_1(s), \cdots, h_r(s)\}$ is a maximal algebraic independent set in $\Re$.
 Thus $$\Psi_\tau(s, X, 0)=\Psi_\tau(s, h_1(s), \cdots, h_r(s), 0)\equiv0.$$
By \eqref{Psi1}, we get
\begin{align}\label{psi2}
\Psi(s, X, t)
=&\Psi(s, X, 0)=\sum_{j=1}^n\hat{f}_j(s, X)\bar{f}_j(0)+\mu\alpha\log\det(\varphi_{i\bar{j}}(\widehat{G}_1((s, X),\bar {G}_1(0))))_{1\leq i, j\leq m}\nonumber\\
&
-\mu\beta\log\phi(\widehat{G}_1(s,X), \bar{G}_1(0))-\mu\log (\exp u(x (\widehat{G}(s,X), \bar G(0))))-\mathfrak{a}(s)-\overline{\mathfrak{a}}(0),
\end{align}
where $\bar{G}_1(0)$ and $\bar G(0)$ are denoted by the point $\bar{z}_{0}$ and $(\bar{z}_{0}, \bar{\xi}_{0})$, respectively.

We claim that
 there exists $(s_0, t_0) \in V\times \overline{V}$ with $t_0\ne 0$ such that
$\sum_{j=1}^n\hat{f}_j(s_0, X)\bar{f}_j(t_0)$ is not constant in $X$. In fact,
if it is a constant in  $V\times \overline{V}$, let $t=\bar{s}$, $X=(h_1(s), \cdots, h_r(s))$, then
\[\sum_{j=1}^n|f_j(s)|^2=\sum_{j=1}^n\hat{f}_j(s, h_1(s), \cdots, h_r(s))\bar{f}_j(\bar{s})\equiv constant, ~~~ s\in V.\]
 It is impossible.

Choosing the points $s_0, t_0$ as described above, then $\sum_{j=1}^n\hat{f}_j(s_0, X)\bar{f}_j(t_0)$ is a holomorphic Nash algebraic function in  $X$.
Combine \eqref{Psi1} with \eqref{psi2}, we get
\begin{align*}
&\exp\left(\frac{1}{\mu}\displaystyle{\sum_{j=1}^n\hat{f}_j(s_0, X)\bar{f}_j(t_0)-\frac{1}{\mu}\overline{\mathfrak{a}}(t_0)+\frac{1}{\mu}\overline{\mathfrak{a}}(0)}\right)\\
&=\frac{\det^{\alpha}\left(\varphi_{i\bar{j}}(\widehat{G}_1((s_0, X),\bar {G}_1(0)))\right)\phi^{\beta} (\widehat{G}_1((s_0, X), \bar{G}_{1}(t_{0}))) \exp u(x (\widehat{G}(s_{0},X), \bar G(t_{0})))}
{\det^{\alpha}\left(\varphi_{i\bar{j}}(\widehat{G}_1((s_0, X),\bar {G}_1(t_{0})))\right)\phi^{\beta} (\widehat{G}_1((s_0, X), \bar{G}_{1}(0))) \exp u(x (\widehat{G}(s_{0},X), \bar G(0)))},
\end{align*}
where $\phi(z, w)=\exp(\varphi(z, w))$ and $x ((z, \xi), (w, \zeta))=\langle\xi, \zeta\rangle h(z,w)=\langle\xi, \zeta\rangle(\exp(\varphi(z, w)))^{l}$.
By  Lemma \ref{Ma2024} and Lemma \ref{lemk},  $F$ must be a constant map. We complete the proof.
\end{proof}

\section{Some examples}\label{sec:4}
\subsection{Three kinds of  ball bundles equipped with their Bergman metrics}\label{sec4.2}
Let $M$ be a complex manifold and denote by $L$ its canonical line
bundle. Suppose the Hilbert space $H^2(M, L)$ of square-integrable holomorphic $m$-forms
on $M$ has no base points, and denote by $\mathcal{K}_M(z, w)$ the Bergman kernel form on $L$. Regarding
$\mathcal{K}_M(z, z')= K_M(z, z')dz_1 \wedge \cdots \wedge dz_m \wedge \overline{dz'_1} \wedge \cdots \wedge \overline{dz'_m}$ as a Hermitian metric $h^{*}$ on the anti-canonical line bundle $L^{*}$. Denote by
$$\Theta_{h}=-\sqrt{-1} \partial\overline{\partial}\log K_M(z, z')^{-1} \geq 0$$ the curvature form of the dual metric $h$ on $L$, and write
$ds_{M}^{2}
$
for the corresponding semi-K\"ahler metric on $M$. We say that $ds_{M}^{2}$ is a \textbf{Bergman metric} whenever $ds_{M}^{2}$
is positive definite.
If the manifold is just a domain in $\mathbb{C}^{m}$, then by the identification
\begin{equation}\label{n0form}
f(z) dz_{1} \wedge\cdots \wedge dz_{m} \rightarrow f(z)
\end{equation}
of $(m, 0)$-forms with functions, one recovers the usual definition of the Bergman space and
Bergman kernel of domains in $\mathbb{C}^{m}$. The function $K_M(z, z')$ associated with the form $\mathcal{K}_M(z, z')$ is the usual Bergman kernel function of the domains.\\

Let $G^{*}$ be the holomorphic automorphism group, $K$ be an isotropy  subgroup at some point of $M$.
Then the \textbf{compact Hermitian symmetric space} $M=G^{*}/K$. Let $L$ be the homogeneous line bundle over $M = G^{*}/K$ induced by the representation
$k \rightarrow (\det(\mathrm{Adk}))^{\frac{1}{p}}, k \in K$. Denote by $p$ the genus of $G^{*}/K$. The bundle
$L^{p}$ is then the top exterior product $\bigwedge^{m} T(1,0)$ of the holomorphic tangent bundle over  $G^{*}/K$.
Using the local coordinates $\mathfrak{p}^{+}:z \rightarrow \exp(iz) \in G^{*}/K$, we have that the fiber metric in $L^{p}$
is given by
$$\|\partial_{1}\wedge\cdots \wedge \partial_{m}\|_{z}= h(z, -z)^{-p}.$$
Denoting by $e(z)$ a local holomorphic section of $L$ so that $e(z)^{p} = \partial_{1}\wedge\cdots \wedge \partial_{m}$, we see
that the metric on $L$ is given by
$$\|e(z)\|^{2}_{z} = h(z, -z)^{-1},$$
where $h(z, -z)$ is an irreducible polynomial  called the Jordan canonical polynomial.
Let $L^{*}$ be the dual bundle of $L$ and  $e^{*}(z)$ be the local section of $L^{*}$ dual to $e(z)$.
Let $D(L^{*\mu})$ and  $S(L^{*\mu})=\partial D(L^{*\mu})$ be the unit disc bundle and  the unit circle bundle of  the higher powers  $L^{*\mu}$ of
$L^{*}$, $\mu = 0, 1, 2, \cdots$, i.e.,
 $$D(L^{*\mu}) = \{\xi\in L^{*\mu}: \|\xi\|^{2}< 1\}.$$
A local defining function for $S(L^{*\mu})$ is
given by
$$\rho(z, \lambda e^{*\mu}(z)) = |\lambda|^{2}h(z, -z) - 1, \lambda \in \mathbb{C}, z \in  \mathfrak{p}^{+},$$
where $e^{*\mu}(z)=e^{*}(z)\otimes\cdots \otimes e^{*}(z)$ is the local section of $L^{*\mu}$.

Identifying $\mathfrak{p}^{+}$ with a dense open subset of $M$ of full measure as described in \cite{Englis2010}, and using the local trivializing section $e^{*}(z)$ as before, the correspondence
$\mathfrak{p}^{+} \times \mathbb{C} \ni(z, \lambda) \mapsto \xi = (z, \lambda e^{*\mu}(z)) \in L^{*\mu} $ sets up a bijection between a dense open subset
of $D$ of full measure and the Hartogs domain
$$\Omega_{\mu}= \{(z, \lambda) \in \mathfrak{p}^{+} \times \mathbb{C} : |\lambda|^{2}h^{\mu}(z, -z) < 1\}.$$

Denote by
$$\rho(x, \alpha; y,\beta) = \alpha \bar\beta h(x, -y)^{\mu} - 1,$$
the sesqui-holomorphic extension of the defining function $\rho$.
\begin{lemma}\label{Englis} \cite{Englis2010}
The Bergman kernel of the disc bundle $D(L^{*\mu})$ is given, in local coordinates
$\alpha e^{*\mu}(z)\rightarrow (z, \alpha) \in \mathfrak{p}^{+} \times \mathbb{C}, |\alpha|^{2}h(z, -z)^{\mu} < 1$, by
$$K_{D}(x, \alpha; y, \beta)=K^{*}(x, \alpha; y,\beta) dx_{1} \wedge\cdots\wedge dx_{m} \wedge d\alpha \wedge d\bar{y}_{1}\wedge\cdots \wedge d\bar{y}_{m} \wedge d\bar{\beta},$$
where
$$K^{*}(x, \alpha; y, \beta)=\sum_{\nu=0}^{+\infty} \frac{\nu+1}{\pi}c_{\mu,\nu}h(x, -y)^{\mu \nu+1}(\alpha \bar\beta)^{\nu}.$$
where $c_{\mu, \nu}=\frac{((\mu \nu+p-\frac{m}{r}))\frac{m}{r}}{((p-\frac{m}{r} ))\frac{m}{r}}$.
It has an expansion in terms of the sesqui-holomorphically extended defining function
$\rho(x, \alpha; y,\beta)=\alpha\bar\beta h(x,-y)^{\mu}-1$ as
$$\frac{K^{*}(x, \alpha; y,\beta)}{h(x, -y)} = c_{0}\rho(x, \alpha; y,\beta)^{-m-2} +\cdots+ c_{m+1}\rho(x,\alpha; y,\beta)^{-1},$$
where $c_0 = (-1)^{m+2} \frac{(m + 1)!\mu^m}{((p-\frac{m}{r} ))_{\frac{m}{r}}}$,
$c_j$ are some real constants and
$$\rho(x, \alpha; y,\beta) = \alpha \bar\beta h(x, -y)^{\mu} - 1.$$
\end{lemma}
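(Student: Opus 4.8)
The plan is to exploit the circle action $\lambda\mapsto e^{i\theta}\lambda$ on the fibres of the disc bundle, reducing the computation to one–dimensional fibre integrals combined with the reproducing kernels of spaces of holomorphic sections of powers of $L$ over the compact base $M=G^{*}/K$. Throughout I identify $D(L^{*\mu})$ with the Hartogs domain $\Omega_{\mu}=\{(z,\lambda)\in\mathfrak{p}^{+}\times\mathbb{C}:|\lambda|^{2}h(z,-z)^{\mu}<1\}$ as in the paragraph preceding the lemma, and I use the identification \eqref{n0form} of top–degree holomorphic forms with functions in the coordinates $(z,\lambda)$, so that everything is phrased in terms of the scalar kernel $K^{*}$. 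First I would decompose the Bergman space $A^{2}(\Omega_{\mu})=\bigoplus_{\nu\ge0}A^{2}_{\nu}$ into the orthogonal eigenspaces of the unitary fibre rotations; an element of $A^{2}_{\nu}$ has the form $\lambda^{\nu}g(z)$, and since $\Omega_{\mu}$ is, off a set of measure zero, a disc bundle over all of $M$, square–integrability forces $g$ to be a holomorphic section over $M$ of a definite power of $L$, the power being fixed by the genus $p$ of $G^{*}/K$, the normalisation $\|e(z)\|^{2}=h(z,-z)^{-1}$, and the relative canonical bundle of the fibration. In particular each $A^{2}_{\nu}$ is finite dimensional, and the Bergman kernel splits as $K^{*}(x,\alpha;y,\beta)=\sum_{\nu\ge0}K_{\nu}(x,y)(\alpha\bar\beta)^{\nu}$ with $K_{\nu}$ the reproducing kernel of $A^{2}_{\nu}$.

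Next I would carry out the fibre integral: for a weight–$\nu$ element, Fubini over the disc fibre together with the elementary identity $\int_{|\lambda|^{2}<R}|\lambda|^{2\nu}\,dA(\lambda)=\frac{\pi}{\nu+1}R^{\nu+1}$ shows that the $L^{2}(\Omega_{\mu})$ norm of $\lambda^{\nu}g(z)$ equals $\frac{\pi}{\nu+1}$ times a weighted $L^{2}$ norm of $g$ over $\mathfrak{p}^{+}$ against a power of $h(z,-z)^{-1}$, i.e.\ the natural $L^{2}$ norm on the corresponding space of sections of a power of $L$ over $M$; hence $K_{\nu}=\frac{\nu+1}{\pi}\widetilde{K}_{\nu}$, with $\widetilde{K}_{\nu}$ the reproducing kernel of that space. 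The heart of the matter is the explicit evaluation of $\widetilde{K}_{\nu}$: because $M=G^{*}/K$ is a compact Hermitian symmetric space and $h(z,-w)$ is its Jordan canonical polynomial, the relevant space of sections carries an irreducible $G^{*}$–action and its reproducing kernel is $G^{*}$–equivariant, which forces $\widetilde{K}_{\nu}(x,y)$ to be a single constant times a power of $h(x,-y)$; the constant is the ratio of the dimension of the space to its total mass, and the Weyl dimension formula for this line–bundle power — equivalently the evaluation of the Gindikin–Hua integral $\int_{\mathfrak{p}^{+}}h(z,-z)^{-s}\,dV_{0}(z)$ in terms of shifted Pochhammer symbols, as in the Faraut–Kor\'anyi theory — produces exactly the coefficients $c_{\mu,\nu}=\dfrac{((\mu\nu+p-\frac{m}{r}))_{\frac{m}{r}}}{((p-\frac{m}{r}))_{\frac{m}{r}}}$ and the exponent $\mu\nu+1$ of $h(x,-y)$ appearing in the statement. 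Substituting back into the previous paragraph gives the first displayed formula for $K^{*}$.

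For the expansion in powers of $\rho$, I would set $t=\alpha\bar\beta\,h(x,-y)^{\mu}$, so that $\rho(x,\alpha;y,\beta)=t-1$ and $\frac{K^{*}}{h(x,-y)}=\frac{1}{\pi}\sum_{\nu\ge0}(\nu+1)c_{\mu,\nu}\,t^{\nu}$. Since $c_{\mu,\nu}$ is a polynomial in $\nu$ of degree $m$ (its numerator is a product of shifted Pochhammer symbols of total degree $m$ in the argument $\mu\nu$), the coefficient $(\nu+1)c_{\mu,\nu}$ has degree $m+1$; expanding it in the basis $\binom{\nu+j}{j}$, $0\le j\le m+1$, and using $\sum_{\nu\ge0}\binom{\nu+j}{j}t^{\nu}=(1-t)^{-j-1}$ together with $1-t=-\rho$ yields the finite expansion $\frac{K^{*}}{h(x,-y)}=c_{0}\rho^{-m-2}+\cdots+c_{m+1}\rho^{-1}$ with real $c_{j}$. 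The top coefficient is produced only by the leading term $\frac{\mu^{m}}{((p-\frac{m}{r}))_{\frac{m}{r}}}\nu^{m+1}$ of $(\nu+1)c_{\mu,\nu}$ matched against $\binom{\nu+m+1}{m+1}\sim\nu^{m+1}/(m+1)!$, which after $(1-t)^{-m-2}=(-1)^{m+2}\rho^{-m-2}$ gives $c_{0}=(-1)^{m+2}\frac{(m+1)!\,\mu^{m}}{((p-\frac{m}{r}))_{\frac{m}{r}}}$; reality of the remaining $c_{j}$ is immediate from the partial–fraction step. Reinstating the wedge factor $dx_{1}\wedge\cdots\wedge d\alpha\wedge\overline{dy_{1}}\wedge\cdots\wedge\overline{d\beta}$ finishes the proof.

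I expect the main obstacle to be the evaluation of $\widetilde{K}_{\nu}$ — that is, identifying the reproducing kernel of the space of holomorphic sections with a power of the Jordan canonical polynomial and computing the normalising constant $c_{\mu,\nu}$ in closed form. This relies on the harmonic analysis of the compact Hermitian symmetric space (irreducibility of the spaces of sections, Schur orthogonality of the Peter–Weyl components of $h(x,-y)^{\pm s}$, and the explicit Weyl/Gindikin dimension formula), together with the bookkeeping that pins down the exponent $\mu\nu+1$ from the genus $p$ and the relative canonical bundle of the disc bundle. By comparison, the fibre decomposition, the elementary fibre integral, and the summation of a power series with polynomial coefficients are routine.
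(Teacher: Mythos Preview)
The paper does not prove this lemma at all: it is quoted as a result of Engli\v{s}--Zhang \cite{Englis2010} and used as a black box (in particular as input to Lemma~\ref{lem3} via the inflation principle). There is therefore no in-paper proof to compare against.

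That said, your outline reproduces the standard argument behind the cited result and is sound. The fibre-rotation decomposition $A^{2}(\Omega_{\mu})=\bigoplus_{\nu\ge0}A^{2}_{\nu}$, the radial integral producing the factor $\frac{\nu+1}{\pi}$, and the identification of the weighted base kernel $\widetilde{K}_{\nu}$ with a power of the Jordan canonical polynomial via the Faraut--Kor\'anyi/Gindikin--Hua machinery are exactly how Engli\v{s}--Zhang proceed. Your summation argument for the $\rho$-expansion is also correct: $c_{\mu,\nu}$ is a polynomial of degree $m$ in $\nu$, so $(\nu+1)c_{\mu,\nu}$ has degree $m+1$, and rewriting in the basis $\binom{\nu+j}{j}$ together with $\sum_{\nu\ge0}\binom{\nu+j}{j}t^{\nu}=(1-t)^{-j-1}$ gives the finite expansion in powers of $\rho=t-1$. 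The one place that deserves an explicit line in a full write-up is the bookkeeping that forces the exponent on $h(x,-y)$ to be $\mu\nu+1$ rather than $\mu\nu$: the extra $+1$ comes from the Jacobian of the trivialisation $\mathfrak{p}^{+}\times\mathbb{C}\to L^{*\mu}$ (equivalently, the relative canonical bundle contributes one more power of $L$), and this should be stated rather than absorbed into ``identifying forms with functions''. Apart from that, and minor constant-tracking (e.g.\ whether the $\pi$ from the fibre integral is absorbed into the $c_{j}$), your plan is complete.
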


By the inflation principle given by Boas, Fu and Straube \cite{Boas1999}, it can be extended to
the unit ball bundle of the direct sum of $L^{*s}$.
For any fix $k\in \mathbb{Z}^{+}$, set
$(E_{k}, H_{k}) = (L^{*\mu}, h^{\mu})\oplus \cdots\oplus (L^{*\mu}, h^{\mu}),$ $k$ copies of $(L^{*\mu}, h^{\mu})$ on the right hand side.
The ball bundle is defined by
 \begin{equation}\label{BkCHSS}
 \mathbb{B}(E_{k}) := \{v \in E_{k} : ||v||_{H_{k}}^{2} < 1\},
 \end{equation}
where $\|v\|_{H_{k}}^{2}=\langle\xi,\xi\rangle h(z,-z)^{\mu}=\sum_{j=1}^{k}\xi_{j}\bar\xi_{j}h(z,-z)^{\mu}$.
The unit circle bundles $S(E_{k}) = \partial  \mathbb{B}(E_{k}) $.
A local defining function for $S(E_{k})$ is
given by
$$\rho(z, \nu) = \|\lambda\|^{2}h(z, -z)^{\mu} - 1, \lambda \in \mathbb{C}^{k}, z \in  \mathfrak{p}^{+},$$
where $\nu=\sum_{j=1}^{k}\lambda_{j} \delta_{j}(z)$, $\lambda=(\lambda_{1}, \cdots,\lambda_{k})$ and $\{\delta_{1}(z),\cdots,\delta_{k}(z)\}$ is the  local frame of $E_{k}$.
\begin{lemma}\label{lem3}
The Bergman kernel of the ball bundle  $\mathbb{B}(E_{k})$ in \eqref{BkCHSS}\ is given, in local coordinates
$\alpha e^{*}(z)\rightarrow (z, \alpha) \in \mathfrak{p}^{+} \times \mathbb{C}^{k}, \|\alpha\|^{2}h(z, -z)^{\mu} < 1$, by
$$K_{\mathbb{B}(E_{k})}(x, \alpha; y, \beta)=K^{*}(x, \alpha; y,\beta) dx_{1} \wedge\cdots\wedge dx_{m} \wedge d\alpha_{1} \wedge\cdots\wedge d\alpha_{k} \wedge d\bar{y}_{1}\wedge\cdots \wedge d\bar{y}_{m} \wedge d\bar{\beta}_{1} \wedge\cdots\wedge d\bar{\beta}_{k},$$
where
\begin{equation}\label{K1}
K^{*}(x, \alpha; y, \beta)=\sum_{j=0}^{+\infty}\frac{(j+1)_{k}}{\pi^{k}}\frac{((\mu(j+k-1)+p-\frac{m}{r}))\frac{m}{r}}{((p-\frac{m}{r} ))\frac{m}{r}} h(x, -y)^{\mu (j+k-1)+1}\langle\alpha, \bar\beta\rangle^{j}.
\end{equation}
It has an expansion in terms of the sesqui-holomorphically extended defining function
$\rho(x, \alpha; y,\beta)=\langle\alpha, \bar\beta\rangle h(x,-y)^{\mu}-1$ as
\begin{equation}\label{K2}
\frac{K^{*}(x, \alpha; y,\beta)}{h^{\mu(k-1)+1}(x, -y)} = C_{0}\rho(x, \alpha; y,\beta)^{-(m+k+1)} +\cdots+ C_{m+1}\rho(x, \alpha; y,\beta)^{-k},
\end{equation}
where  $C_j$ are some real constants.
\end{lemma}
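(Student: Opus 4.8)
The plan is to derive the explicit series \eqref{K1} from the disc-bundle kernel of Lemma \ref{Englis} by the inflation principle of Boas, Fu and Straube \cite{Boas1999}, and then to re-sum that series to obtain the expansion \eqref{K2}. First I would pass from the bundle to a domain: as recalled before the statement, the trivializing section $e^{*}(z)$ identifies a dense open subset of full measure of $\mathbb{B}(E_{k})$ with the Hartogs-type domain $\widehat{\Omega}_{k}=\{(z,\alpha)\in\mathfrak{p}^{+}\times\mathbb{C}^{k}:\|\alpha\|^{2}h(z,-z)^{\mu}<1\}$, so after the identification \eqref{n0form} of top-degree forms with functions the Bergman kernel form $K_{\mathbb{B}(E_{k})}$ is determined by the ordinary Bergman kernel of $\widehat{\Omega}_{k}$. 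Expanding a holomorphic $L^{2}$ function on $\widehat{\Omega}_{k}$ into its Taylor series $\sum_{J}f_{J}(z)\alpha^{J}$ in the fiber variable and integrating over the ball fibers, the monomials $\alpha^{J}$ of a fixed total degree $j$ span a reproducing-kernel subspace whose fiber part is the rescaled reproducing kernel $\frac{(j+1)_{k}}{\pi^{k}}\langle\alpha,\bar\beta\rangle^{j}$ of the degree-$j$ homogeneous polynomials on the unit ball of $\mathbb{C}^{k}$, while the $z$-part is the weighted Bergman space of $\mathfrak{p}^{+}$ carrying the weight produced by the fiber integral, namely $h(z,-z)^{-\mu(j+k)}$ against Lebesgue measure. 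This is exactly the mechanism behind Lemma \ref{Englis} in the case $k=1$, where the $\nu$-th fiber mode carries the weight $h(z,-z)^{-\mu(\nu+1)}$; matching exponents via $\mu(j+k)=\mu(\nu+1)$, i.e. $\nu=j+k-1$, the base factor of the $j$-th mode of $\widehat{\Omega}_{k}$ coincides with that of the $(j+k-1)$-th mode of the disc bundle, namely the multiple $c_{\mu,j+k-1}\,h(x,-y)^{\mu(j+k-1)+1}$ of a power of the Jordan polynomial, with $c_{\mu,\nu}$ as in Lemma \ref{Englis}. Replacing the one-dimensional fiber factor $\frac{\nu+1}{\pi}$ there by $\frac{(j+1)_{k}}{\pi^{k}}$ and summing over $j$ yields \eqref{K1}, whose coefficients are real because every ingredient has real Taylor coefficients.

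For the expansion \eqref{K2} I would substitute $\langle\alpha,\bar\beta\rangle h(x,-y)^{\mu}=1+\rho(x,\alpha;y,\beta)$ into \eqref{K1}, so the $j$-th summand becomes $\frac{(j+1)_{k}}{\pi^{k}}c_{\mu,j+k-1}\,h(x,-y)^{\mu(k-1)+1}(1+\rho)^{j}$, and it remains to evaluate $\sum_{j\ge0}Q(j)(1+\rho)^{j}$ where $Q(j)=\frac{(j+1)_{k}}{\pi^{k}}c_{\mu,j+k-1}$ is a polynomial in $j$ of degree $m+k$ (degree $k$ from $(j+1)_{k}$ and degree $m$ from the generalized Pochhammer factor in $c_{\mu,j+k-1}$). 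Since $(j+1)_{k}=(j+1)_{k-1}(j+k)$, the polynomial $Q$ lies in the ideal $(j+1)_{k-1}\,\mathbb{Q}[j]$, which is precisely the $\mathbb{Q}$-span of $\{\binom{j+i}{i}:i\ge k-1\}$; hence $Q(j)=\sum_{i=k-1}^{m+k}d_{i}\binom{j+i}{i}$ with real $d_{i}$, and the generating-function identity $\sum_{j\ge0}\binom{j+i}{i}t^{j}=(1-t)^{-i-1}$ with $t=1+\rho$ gives
\[
\sum_{j\ge0}Q(j)(1+\rho)^{j}=\sum_{i=k-1}^{m+k}(-1)^{i+1}d_{i}\,\rho^{-i-1},
\]
valid where $|1+\rho|<1$ and elsewhere by analytic continuation. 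Dividing through by $h^{\mu(k-1)+1}(x,-y)$ produces \eqref{K2} with real constants $C_{j}$ and the pole order running exactly from $\rho^{-(m+k+1)}$ down to $\rho^{-k}$. Equivalently, the whole passage from Lemma \ref{Englis} to \eqref{K1}--\eqref{K2} is one application of the inflation principle of \cite{Boas1999}, which differentiates the one-fiber kernel $k-1$ times and thereby raises the pole order by $k-1$.

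The step I expect to be the main obstacle is the bookkeeping in the second paragraph: confirming that $Q$ genuinely lies in the ideal generated by $(j+1)_{k-1}$ and that its degree forces the re-summed series to occupy precisely the pole window $[-(m+k+1),-k]$ with nothing outside it, together with checking that the reindexing $\nu=j+k-1$ is compatible with the normalizing constants $c_{\mu,\nu}$ and the powers of $\pi$ in Lemma \ref{Englis}. Justifying the inflation step itself is routine once one works in the local trivialization over $\mathfrak{p}^{+}$, since that reduces everything to the domain statements of \cite{Boas1999} and \cite{Englis2010}.
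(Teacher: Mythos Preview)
Your proposal is correct and rests on the same idea as the paper: pass to the Hartogs domain $\widehat\Omega_k$ and apply the inflation principle of \cite{Boas1999} to the disc-bundle kernel of Lemma~\ref{Englis}. For \eqref{K1} the paper simply writes $K^{*}=\pi^{-(k-1)}\partial_t^{\,k-1}L_1\big|_{t=\langle\alpha,\beta\rangle}$ with $L_1(x,y,t)=\sum_\nu\frac{\nu+1}{\pi}c_{\mu,\nu}h(x,-y)^{\mu\nu+1}t^\nu$, which is exactly your reindexing $\nu=j+k-1$ packaged as a derivative. For \eqref{K2} the paper takes the shortcut you mention in your last sentence: it applies the same $(k-1)$-fold $t$-derivative directly to the closed form $L_2=\sum_{q=1}^{m+2}c_{m+2-q}\,\sigma^{-q}$ with $\sigma=h^\mu t-1$, so each term $\sigma^{-q}$ becomes a constant times $h^{\mu(k-1)}\sigma^{-(q+k-1)}$ and the pole window $[-(m+k+1),-k]$ drops out immediately. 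Your re-summation via the basis $\{\binom{j+i}{i}\}$ is a valid alternative and has the small bonus of explaining intrinsically why no poles of order below $k$ occur (through the divisibility $(j+1)_{k-1}\mid Q(j)$), whereas the paper simply inherits that range from Lemma~\ref{Englis}.
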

\begin{proof}
Identify the $(m, 0)$-form with functions via \eqref{n0form} in the local coordinates,
and thus the Bergman space on $\mathbb{B}(E_{k})$ can be identified with the space of all functions holomorphic and square-integrable on $\Omega_{\mu}$, i.e. with the usual Bergman space on the Hartogs domain $\mathbb{C}^{m+k}$.
Define $$L_{1}(x,y,t)=\sum_{\nu=0}^{+\infty}\frac{(\nu+1)}{\pi} c_{\mu,\nu}h(x, -y)^{\mu \nu+1}t^{\nu},$$
and $$L_{2}(x,y,t)= c_{0}\sigma(x,  y,t)^{-m-2} +\cdots+ c_{n+1}\sigma(x,  y,t)^{-1},$$
where $\sigma(x,  y,t)=h(x,-y)^{\mu}t-1$.
By the inflation principle and Lemma \ref{Englis}, the Bergman kernel of $B(E_k)$ is
\begin{eqnarray*}
K^{*}(x, \alpha; y,\beta)&=&\frac{1}{\pi^{k-1}}\frac{\partial^{k-1} L_{1}(x,y,t)}{ \partial t^{k-1}}|_{t=\langle\alpha,\beta\rangle}\\
&=&\sum_{j=0}^{+\infty}\frac{(j+1)_{k}}{\pi^{k}} c_{\mu,j+k-1}h(x, -y)^{\mu(j+k-1)+1}t^{j}|_{t=\langle\alpha,\beta\rangle},
\end{eqnarray*}
where $(j + 1)_{k}$ denotes the Pochhammer polynomial of degree $k$, i.e. $(j + 1)_{k} = \frac{\Gamma(j+1+k)}{\Gamma(j+1)},$
and
\begin{eqnarray*}
&&\frac{K^{*}(x, \alpha; y,\beta)}{h(x, -y)}=\frac{1}{\pi^{k-1}}\frac{\partial^{k-1} L_{2}(x,y,t)}{ \partial t^{k-1}}|_{t=\langle\alpha,\beta\rangle}\\
&=& \frac{1}{\pi^{k-1}}h^{\mu(k-1)}(x, -y)\left(\frac{(m+k)!}{(m+1)!}c_{0}\sigma(x,  y,t)^{-(m+k+1)} +\cdots+ (k-1)!c_{m+1}\sigma(x,  y,t)^{-k}\right)|_{t=\langle\alpha,\beta\rangle}.
\end{eqnarray*}
The conclusion  follows.
\end{proof}
From \eqref{K2},  the Bergman metric  of $B(E_k)$ in \eqref{BkCHSS} is given by
\[\omega_{B_{E(k)}}=(\mu(k-1)+1)\pi^*\omega_{M}+\sqrt{-1}\partial\bar{\partial}u( ||\alpha||^2h(x,-x)^{\mu}-1),\]
  where $\omega_{M}=\sqrt{-1}\partial \bar\partial\log h(z, -z)$, and
  $$\exp u(\rho)=  C_{0}\rho(x, \alpha; y,\beta)^{-(m+k+1)} +\cdots+ C_{m+1}\rho(x, \alpha; y,\beta)^{-k}.$$ Notice that $h(z, -w)$ is  the Jordan canonical polynomial of the compact Hermitian symmetric space.
 Thus the base space is an exponent Nash-algebraic K\"ahler manifold.
Corollary  \ref{cor1} follows from Lemma   \ref{lem3} and Theorem \ref{thm1}.

\begin{corollary}\label{cor1}
 Let $(L, h)$ be a Hermitian line bundle such that the $p$-power bundle $L^{p}$ is the top exterior product $\bigwedge^{m} T(1,0)$ of the holomorphic tangent bundle over a compact Hermitian symmetric space $M$.
Let  $(L^{*}, h^{-1})$  be the dual bundle of $(L, h)$.
For any fix $k\in \mathbb{Z}^{+}$, set
$(E_{k}, H_{k}) = (L^{*\mu}, h^{-\mu})\oplus \cdots\oplus (L^{*\mu}, h^{-\mu}).$
There don't exist  common K\"ahler submanifolds between the  complex Euclidean space  and  the ball bundle of $(E_{k}, H_{k})$ endowed with the Bergman metric.
\end{corollary}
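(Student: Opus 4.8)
The plan is to exhibit the Bergman metric of $\mathbb{B}(E_{k})$ as an instance of the ansatz~\eqref{equ:om} for which the two hypotheses \textbf{A} and \textbf{B} of Theorem~\ref{thm1} are satisfied, and then to run the standard reduction recorded right after Theorem~\ref{thm1}.

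\emph{The metric has the form \eqref{equ:om}.} By Lemma~\ref{lem3}, the expansion~\eqref{K2} of the Bergman kernel of $\mathbb{B}(E_{k})$ in powers of the sesqui-holomorphically extended defining function $\rho=\langle\alpha,\bar\beta\rangle h(x,-y)^{\mu}-1$ yields, in the coordinates coming from $\mathfrak{p}^{+}$,
\[
\omega_{\mathbb{B}(E_{k})}=\bigl(\mu(k-1)+1\bigr)\pi^{*}\omega_{M}+\sqrt{-1}\partial\bar\partial\,u\bigl(\|v\|_{H_{k}}^{2}\bigr),\qquad \exp u(\rho)=C_{0}\rho^{-(m+k+1)}+\cdots+C_{m+1}\rho^{-k},
\]
where $\omega_{M}=\sqrt{-1}\partial\bar\partial\log h(z,-z)$. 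This is precisely \eqref{equ:om} with $\alpha=0$ and $\beta=\mu(k-1)+1$, the Hermitian line bundle being $(L^{*\mu},h^{-\mu})$: its local fibre weight is $h(z,-z)^{\mu}$, so $\|v\|_{H_{k}}^{2}=\langle\xi,\xi\rangle h(z,-z)^{\mu}$ is the weight appearing in \eqref{equ:om}, and $\sqrt{-1}\partial\bar\partial\log h(z,-z)^{\mu}=\mu\,\omega_{M}$ identifies the constant $l=\mu$.

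\emph{Conditions \textbf{A} and \textbf{B}.} On $\mathfrak{p}^{+}$ the function $\psi(z)=\log h(z,-z)$ is a K\"ahler potential of $\omega_{M}$, and the polarization of $\exp\psi$ is the Jordan canonical polynomial $h(z,-w)$, a polynomial and hence holomorphic Nash algebraic on $\mathfrak{p}^{+}\times\text{conj}(\mathfrak{p}^{+})$; by homogeneity every point of $M$ admits such a chart, so $(M,g_{M})$ is exponent Nash-algebraic, which is \textbf{A}. (Since $\alpha=0$, in the notation of Lemma~\ref{prop1.3} one has $\omega'_{M}=\beta\,\omega_{M}$ with $\beta=\mu(k-1)+1>0$, a K\"ahler form.) For \textbf{B}: the diagonal restriction of $\rho$ equals $\|v\|_{H_{k}}^{2}-1$ and runs over $[-1,0)$ as $\|v\|_{H_{k}}^{2}$ runs over $[0,1)$, so, writing $x=\|v\|_{H_{k}}^{2}$, one has $\exp u(x)=C_{0}(x-1)^{-(m+k+1)}+\cdots+C_{m+1}(x-1)^{-k}$, a rational function of $x$ with no pole on $[0,1)$; it is positive there because it is the diagonal restriction of $K^{*}/h^{\mu(k-1)+1}>0$. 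Hence $\exp u\colon[0,1)\to\mathbb{R}^{+}$ is rational.

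\emph{Conclusion.} Suppose, for contradiction, that $(\mathbb{C}^{n},g_{\mathbb{C}^{n}})$ and $(\mathbb{B}(E_{k}),g_{\mathbb{B}(E_{k})})$ share a common K\"ahler submanifold $(S,g_{S})$. Restricting the holomorphic isometric immersions of $S$ into $\mathbb{C}^{n}$ and into $\mathbb{B}(E_{k})$ (the latter up to a positive scalar) to a non-constant holomorphic disc $V\subset\mathbb{C}$ inside $S$, and shrinking $V$ so that its image lies in one coordinate chart of $\mathbb{B}(E_{k})$, produces holomorphic maps $F\colon V\to\mathbb{C}^{n}$ and $G\colon V\to\mathbb{B}(E_{k})$ with $F$ non-constant (the composition of an immersion with a non-constant curve) and $F^{*}\omega_{\mathbb{C}^{n}}=\mu\,G^{*}\omega_{\mathbb{B}(E_{k})}$ on $V$ for a real constant $\mu$, i.e.\ equation~\eqref{01}. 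By the two previous paragraphs the hypotheses of Theorem~\ref{thm1} hold, so $F$ must be constant --- a contradiction; hence no such common K\"ahler submanifold exists. The one place calling for care is bookkeeping: lining up the coefficient $\mu(k-1)+1$ and the weight $\langle\xi,\xi\rangle h(z,-z)^{\mu}$ from \eqref{K2} with the general shape \eqref{equ:om}, and confirming that the finite combination of negative powers of $\|v\|_{H_{k}}^{2}-1$ produced by \eqref{K2} is a genuine rational function of $\|v\|_{H_{k}}^{2}$ that stays positive on $[0,1)$; once that is done the result is immediate from Lemma~\ref{lem3} and Theorem~\ref{thm1}.
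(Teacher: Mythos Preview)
Your proposal is correct and follows essentially the same route as the paper: use Lemma~\ref{lem3} (the expansion~\eqref{K2}) to recognise the Bergman metric of $\mathbb{B}(E_{k})$ as an instance of the ansatz~\eqref{equ:om} with $\alpha=0$, $\beta=\mu(k-1)+1$; verify condition~\textbf{A} because the polarization of $\exp\psi$ is the Jordan canonical polynomial $h(z,-w)$, and condition~\textbf{B} because $\exp u$ is a finite combination of negative powers of $x-1$; then invoke Theorem~\ref{thm1}. The only cosmetic quibble is the overload of the symbol $\mu$ in your conclusion paragraph (it already denotes the power of $L^{*}$), and the phrase ``up to a positive scalar'' is unnecessary since for a genuine common K\"ahler submanifold the constant in~\eqref{01} is simply~$1$.
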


Let $\Omega$ be a bounded domain in $\mathbb{C}^{m}$. Define $L=\Omega\times\mathbb{C}$ and  $h$ be the Hermitian metric on $L$ defined by
$h(z,z'):=K^{-1}(z,z)$ for any $(z,\xi)$,$(z',\xi')\in  L$.
Then $\omega_{M}:=\sqrt{-1}\partial \bar\partial \log K(z,z)$ induces the Bergman metric $g_{\Omega}$ on $\Omega$.
Notice that $(E_{k}, H_{k})=(L, h)\oplus \cdots\oplus (L, h)=(\Omega\times\mathbb{C}^{k}, K^{-1}(z,z)I_{k}).$
The ball bundle  is a Hartogs domain over a bounded homogeneous domain. That is
\begin{equation}\label{BH}
\widehat{\Omega}_{m,s}:=\left\{(z,\xi) \in \Omega \times \mathbb{C}^{k}: \|\xi\|^{2}K_\Omega(z,z)^{s} < 1\right\}
\end{equation}
for $s\in \mathbb{R}^{+}$. It is called \textbf{Bergman-Hartogs domain}.
\begin{lemma}\cite{Ishi2017}\label{lem4} If the bounded domain $\Omega$  is homogeneous, then the   Bergman kernel  of Hartogs domain $\widehat{\Omega}_{m,s}$ is
\begin{equation}\label{BK}
K_{\widehat{\Omega}_{m,s}}((z,\xi),(z',\xi'))
=\frac{K_{\Omega}(z,z')^{ms+1}}{\pi^{m}}\sum_{j=0}^{m+1}\frac{c(s,j)(j+m)!}{(1-x)^{j+m+1}}\mid_{x=K_{\Omega}(z,z')^{s}<\xi,\xi'>},
\end{equation}
where $(z,\xi), (z',\xi') \in \widehat{\Omega}_{m,s}$, and $c(s, j)$ are constants depending
on $\Omega$.
\end{lemma}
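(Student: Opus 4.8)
The plan is to run the Forelli--Rudin inflation argument over the homogeneous base $\Omega$, just as in the proof of Lemma \ref{lem3}, the new feature being that over a homogeneous domain every weighted Bergman kernel with weight a power of $K_{\Omega}$ is itself a power of $K_{\Omega}$. After identifying $(m,0)$--forms with functions via \eqref{n0form}, the Bergman space of $\widehat{\Omega}_{m,s}$ is the space of holomorphic $L^{2}$ functions on the Hartogs domain $\{(z,\xi):\|\xi\|^{2}K_{\Omega}(z,z)^{s}<1\}$, whose fibre over $z$ is the ball of radius $K_{\Omega}(z,z)^{-s/2}$; write $m$ for the fibre dimension (so as to match the exponents in \eqref{BK}). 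Expanding a holomorphic function in the fibre variable, $F(z,\xi)=\sum_{\nu}a_{\nu}(z)\xi^{\nu}$, and integrating over the fibre first gives the orthogonal decomposition
\[
\|F\|^{2}=\sum_{\nu}c_{\nu}\int_{\Omega}|a_{\nu}(z)|^{2}\,K_{\Omega}(z,z)^{-(|\nu|+m)s}\,dV(z),\qquad c_{\nu}=\int_{\|\xi\|<1}|\xi^{\nu}|^{2}\,dV(\xi)=\frac{\pi^{m}\,\nu!}{(|\nu|+m)!}.
\]
Hence $K_{\widehat{\Omega}_{m,s}}\big((z,\xi),(z',\xi')\big)=\sum_{\nu}c_{\nu}^{-1}\,K_{\Omega,(|\nu|+m)s}(z,z')\,\xi^{\nu}\overline{\xi'}^{\,\nu}$, where $K_{\Omega,t}$ denotes the reproducing kernel of the Bergman space of $\Omega$ with weight $K_{\Omega}(z,z)^{-t}$. (One may instead reduce the $m$--dimensional fibre to a one--dimensional one by the Boas--Fu--Straube inflation principle \cite{Boas1999} together with $(m-1)$--fold differentiation in the fibre, exactly as in the proof of Lemma \ref{lem3}.)

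The decisive step is the identity $K_{\Omega,t}(z,z')=c(t)\,K_{\Omega}(z,z')^{t+1}$, valid for a constant $c(t)>0$ precisely because $\Omega$ is homogeneous. Using the transformation rule $K_{\Omega}(gz,gw)\,\overline{J_{g}(w)}\,J_{g}(z)=K_{\Omega}(z,w)$ for $g\in\mathrm{Aut}(\Omega)$, the weighted measure $K_{\Omega}(z,z)^{-t}\,dV(z)$ is relatively invariant with multiplier $|J_{g}(z)|^{2(t+1)}$; consequently the quotient $K_{\Omega,t}(z,w)/K_{\Omega}(z,w)^{t+1}$ is $\mathrm{Aut}(\Omega)$--invariant, hence constant on the diagonal by transitivity, hence identically constant since it is holomorphic in $z$, anti--holomorphic in $w$, and determined by its diagonal restriction. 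The constant $c(t)$ is the one attached to $\Omega$ by Gindikin's $\Gamma$--function formula; the only property of it that we need is that it is a \emph{polynomial} in $t$ (a product of finitely many linear factors), of degree a numerical invariant of $\Omega$.

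Now insert this into the first step and sum the terms with $|\nu|=j$ using $\sum_{|\nu|=j}\xi^{\nu}\overline{\xi'}^{\,\nu}/\nu!=\langle\xi,\xi'\rangle^{j}/j!$. Setting $x=K_{\Omega}(z,z')^{s}\langle\xi,\xi'\rangle$ this yields
\[
K_{\widehat{\Omega}_{m,s}}\big((z,\xi),(z',\xi')\big)=\frac{K_{\Omega}(z,z')^{ms+1}}{\pi^{m}}\sum_{j\ge0}\frac{(j+m)!}{j!}\,c\big((j+m)s\big)\,x^{j}.
\]
The coefficient of $x^{j}$ equals $\dfrac{(j+m)!}{j!}\,c((j+m)s)=(j+1)(j+2)\cdots(j+m)\cdot c((j+m)s)$, a polynomial in $j$ that is divisible by $(j+1)\cdots(j+m)$; expanding it in the basis $\big\{\binom{j+\ell}{\ell}:\ell\ge m\big\}$ and summing the geometric series $\sum_{j}\binom{j+\ell}{\ell}x^{j}=(1-x)^{-(\ell+1)}$ collapses the sum to a finite expression of the form $\sum_{j}\dfrac{c(s,j)(j+m)!}{(1-x)^{j+m+1}}$, the number of terms being governed by $\deg c$. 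This is exactly \eqref{BK}, and reading off the explicit $c(s,j)$ from Gindikin's constant $c(t)$ completes the identification.

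The main obstacle is the middle step: establishing that over a \emph{homogeneous} (not necessarily symmetric) base the weighted Bergman kernels with power weights are again powers of $K_{\Omega}$, and pinning down enough of the $t$--dependence of $c(t)$ (polynomiality, and its degree) to be sure the fibre series terminates in the stated way. Both facts are consequences of Gindikin's analysis of homogeneous bounded domains; once they are granted, the remainder is the bookkeeping above, which is the content of \cite{Ishi2017}.
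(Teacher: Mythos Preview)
The paper does not prove this lemma at all; it is quoted verbatim from \cite{Ishi2017} and used as a black box. Your sketch is in fact a faithful outline of the argument in that reference: the Forelli--Rudin fibre expansion reduces the problem to the weighted Bergman kernels $K_{\Omega,t}$, homogeneity forces $K_{\Omega,t}=c(t)\,K_{\Omega}^{\,t+1}$ by the transitivity argument you give, and Gindikin's formula makes $c(t)$ a polynomial whose degree controls the length of the finite sum in \eqref{BK}. So your approach is correct and coincides with the source the paper cites; there is nothing further to compare, since the present paper contributes no proof of its own.

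One small caution on bookkeeping: the degree of $c(t)$ (and hence the upper index of the finite sum) is governed by the dimension of the \emph{base} $\Omega$, whereas the $\pi^{m}$ and $(j+m)!$ factors come from the \emph{fibre} dimension. The statement as reproduced in the paper uses the same letter $m$ for both, which you noticed; just be sure when you write this up that the two roles are kept separate so that the claimed range $0\le j\le m+1$ actually matches the degree of the Gindikin polynomial for the homogeneous domain in question.
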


This lemma shows that the K\"ahler form of the complete Bergman metric on $\widehat{\Omega}_{m,s}$ is
$$
\omega_{\widehat{\Omega}_{m,s}}=(ms+1)\pi^{*}(\omega_{\Omega})-\sqrt{-1}\partial\bar\partial u(K_{\Omega}(z,z)^{s}<\xi,\xi>),
$$
where $\omega_{\Omega}=\partial \bar\partial\log K_{\Omega}(z,z)$ and $\exp u(x)=\sum_{j=0}^{m+1}\frac{c(s,j)(j+m)!}{(1-x)^{j+m+1}}$.

\begin{corollary}\label{cor2}
There don't exist  common K\"ahler submanifolds between the  complex Euclidean space  and the Bergman-Hartogs domain \eqref{BH}  equipped with its Bergman metric $g_{\widehat{\Omega}_{m,s}}$. In particular, it is true for Hartogs domains over bounded symmetric domains in \eqref{BH}.
\end{corollary}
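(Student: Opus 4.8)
The plan is to verify that the Bergman-Hartogs domain $\widehat{\Omega}_{m,s}$ fits exactly into the framework of Theorem \ref{thm1}, so that the conclusion follows by a direct application together with Lemma \ref{lem4}. Recall the setup: we take $M = \Omega$ a bounded homogeneous domain in $\mathbb{C}^m$, $L = \Omega\times\mathbb{C}$ with Hermitian metric $h(z,z') := K_\Omega(z,z')^{-1}$, and $E_k = (L,h)^{\oplus k} = (\Omega\times\mathbb{C}^k, K_\Omega(z,z)^{-1}I_k)$. Here $s = \mu$ in the notation of the ansatz. The first step is to observe that $\sqrt{-1}\partial\bar\partial\log h = -\sqrt{-1}\partial\bar\partial\log K_\Omega(z,z) = -\omega_\Omega$, so the hypothesis $\sqrt{-1}\partial\bar\partial\log h = l\,\omega_M$ of Theorem \ref{thm1} holds with $l = -1$ (equivalently, rescaling, one works with $\omega_M = \sqrt{-1}\partial\bar\partial\log K_\Omega(z,z)$ and $h = K_\Omega^{-s}$ absorbed appropriately). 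Then $\|v\|_{H_k}^2 = K_\Omega(z,z)^{s}\langle\xi,\xi\rangle$ is exactly the defining quantity of $\widehat{\Omega}_{m,s}$, and the ball bundle $\mathbb{B}(E_k)$ coincides with $\widehat{\Omega}_{m,s}$.

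Next I would read off from Lemma \ref{lem4}, as the excerpt already records, that the Bergman metric on $\widehat{\Omega}_{m,s}$ has Kähler form
\[
\omega_{\widehat{\Omega}_{m,s}} = (ms+1)\,\pi^*(\omega_\Omega) - \sqrt{-1}\partial\bar\partial u\big(K_\Omega(z,z)^s\langle\xi,\xi\rangle\big),
\]
which is of the form \eqref{equ:om} with $\alpha = 0$, $\beta = ms+1$ (noting $\mathrm{Ric}(\omega_M)$ drops out since $\alpha = 0$, so no real-analyticity obstruction there), and with the radial profile function being $-u$ where $\exp u(x) = \sum_{j=0}^{m+1}\frac{c(s,j)(j+m)!}{(1-x)^{j+m+1}}$. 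The two conditions $\mathbf{A}$ and $\mathbf{B}$ of Theorem \ref{thm1} must then be checked. For $\mathbf{B}$: the function $\exp(-u)(x) = 1/\exp u(x)$ is a rational function of $x$ on $[0,1)$, since $\exp u(x)$ is a finite sum of the rational functions $(1-x)^{-(j+m+1)}$, hence rational, and it is nonvanishing on $[0,1)$ (being a sum of positive terms, as the Bergman kernel is positive); therefore its reciprocal is rational. For $\mathbf{A}$: the base $\Omega$ with its Bergman metric $g_\Omega$ is exponent Nash-algebraic. Here one invokes that for a bounded homogeneous domain the Bergman kernel $K_\Omega(z,z)$ is a rational function (indeed for bounded symmetric domains it is a rational, in fact polynomial after clearing denominators, function of $z$ and $\bar z$), so $\psi(z) = \log K_\Omega(z,z)$ is a Kähler potential whose polarization $\psi(z,w) = \log K_\Omega(z,w)$ satisfies: $\exp\psi(z,w) = K_\Omega(z,w)$ is rational, hence holomorphic Nash algebraic on $U\times\mathrm{conj}(U)$. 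Thus $g_\Omega$ is exponent Nash-algebraic by Definition \ref{def EN}.

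With both conditions verified, I would then argue by contradiction exactly as in the hypothesis of Theorem \ref{thm1}: if $(\mathbb{C}^n, g_{\mathbb{C}^n})$ and $(\widehat{\Omega}_{m,s}, g_{\widehat{\Omega}_{m,s}})$ had a common Kähler submanifold, one could restrict to a one-dimensional complex submanifold and produce holomorphic maps $F : V \to \mathbb{C}^n$, $G : V \to \widehat{\Omega}_{m,s} = \mathbb{B}(E_k)$ with $F^*\omega_{\mathbb{C}^n} = \mu\, G^*\omega_{\mathbb{B}(E_k)}$ on a connected open $V\subset\mathbb{C}$ for some real constant $\mu$ (one may take $\mu=1$). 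Theorem \ref{thm1} then forces $F$ to be constant, contradicting that a common submanifold has positive dimension. This yields the non-existence assertion. The final sentence of the corollary, specializing to Hartogs domains over bounded symmetric domains, is immediate since bounded symmetric domains are in particular bounded homogeneous domains, so Lemma \ref{lem4} applies verbatim.

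The only genuine point requiring care — the "main obstacle," though it is more a matter of bookkeeping than difficulty — is confirming that the local representation of $\widehat{\Omega}_{m,s}$ truly matches the abstract ansatz, namely that the trivialization $L = \Omega\times\mathbb{C}$ makes the fiber norm $|v|_h^2 = K_\Omega(z,z)^{-1}|\xi|^2$ and hence $\|v\|_{H_k}^2 = K_\Omega(z,z)^{-1}\|\xi\|^2$; comparing with the defining inequality $\|\xi\|^2 K_\Omega(z,z)^s < 1$ of \eqref{BH} shows the relevant power of $K_\Omega$ is $-s$, so one should set $h = K_\Omega^{-s}$ (equivalently $l = -s$ relative to $\omega_\Omega$), and then the radial argument $x = \|v\|_{H_k}^2$ of $u$ is precisely $K_\Omega(z,z)^s\langle\xi,\xi\rangle$, matching the formula in Lemma \ref{lem4}. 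Once this identification is pinned down, the proof is a one-line invocation of Theorem \ref{thm1}.
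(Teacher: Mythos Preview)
Your overall structure is correct and matches the paper's approach: identify $\widehat{\Omega}_{m,s}$ with a ball bundle $\mathbb{B}(E_k)$, read the Bergman metric off Lemma~\ref{lem4} as an instance of the ansatz~\eqref{equ:om}, and verify conditions~$\mathbf{A}$ and~$\mathbf{B}$ so that Theorem~\ref{thm1} applies. Your verification of condition~$\mathbf{B}$ (rationality of $\exp(-u)$) is fine.

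The genuine gap is in your verification of condition~$\mathbf{A}$. You assert that ``for a bounded homogeneous domain the Bergman kernel $K_\Omega(z,z)$ is a rational function.'' This is \emph{false} in general: the Bergman kernel of an arbitrary bounded homogeneous domain need not be rational in the given coordinates (it is for bounded \emph{symmetric} domains in their Harish--Chandra realization, which is presumably why you parenthetically cite that case, but the corollary is stated for all bounded homogeneous domains). The paper's proof confronts exactly this point: it observes that $K_\Omega$ may fail to be rational, and instead passes to a biholomorphic image via a Siegel domain of the second kind. Any bounded homogeneous domain is biholomorphic to a Siegel domain $V$, whose Bergman kernel \emph{is} rational; composing with a further rational map realizing $V$ as a bounded domain yields new coordinates $w$ in which $K_{(\psi\circ\phi)(\Omega)}(w,w)$ is rational, hence $\exp\psi$ has Nash-algebraic polarization and $(\Omega,g_\Omega)$ is exponent Nash-algebraic. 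Without this change of coordinates, your argument for condition~$\mathbf{A}$ does not go through for general bounded homogeneous $\Omega$, and the corollary in its stated generality is not established.
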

\begin{proof}
It suffices to prove that $(\Omega, g_{\Omega})$ is  a Nash algebraic K\"ahler manifold.
  In general, the Bergman kernel of a bounded homogeneous domain may  not be rational. But it can be proved that,
for any point $p \in \Omega$, there exists  a local
coordinate neighbourhood $U\subset \Omega$,
and  a K\"ahler potential function $\psi$ of the Bergman metric in $U\subset \Omega$,
such that the polarization function of $\psi$ (resp. $\exp\psi$) is a rational function on $U\times \text{conj}(U)$.
In fact, any bounded homogeneous domain $\Omega$  can be equivalent to a Siegel domain $V$ of the second type by a biholomorphic mapping $\phi$. There also exists a rational mapping $\psi$ such that any Siegel domain $V$ can be equivalent to a bounded homogeneous domain.
Define $w:=(\psi\circ\phi(z))$.
Since  the  Bergman kernel of $V$ is rational, the Bergman kernel function of $\psi\circ\phi(\Omega)$
$$K_{(\psi\circ\phi)(\Omega)}(w,w)=K_{\Omega}(z,z)|\det J(\psi\circ\phi)|^{2}=K_{V}(s,s)|\det J\psi|^{2}$$
 is rational and the K\"ahler form can be expressed by $\omega_{\Omega}=\partial \bar\partial\log K_{\psi\circ\phi(\Omega)}(w,w)$ under the new coordinates. Moreover, we get
\begin{equation}
\widehat{\Omega}_{m,s}\cong\left\{(w,\zeta) \in (\psi\circ\phi)(\Omega) \times \mathbb{C}^{k}: \|\zeta\|^{2}K_{(\psi\circ\phi)(\Omega)}(w,w)^{s} < 1\right\}.
\end{equation}
 Hence, we complete the proof.
\end{proof}

Let $L=\{(z, \xi)\in\mathbb{C}^{m}\times\mathbb{C}\}$
 be  the trivial line bundle over  $\mathbb{C}^{m}$.
Let $h$ be the Hermitian metric on $L$ defined by
$h(z,z'):=e^{<z,z'>}$ for any $(z,\xi)$,$(z',\xi')\in  L$.
Then $\omega:= -c(L, h) = \sqrt{-1}\partial \bar\partial \log h=\sqrt{-1}\partial \bar\partial  ||z||^{2}$ induces the flat metric $g_{\mathbb{C}^{m}}$ on $\mathbb{C}^{m}$. Hence, the eigenvalues $\lambda_{i}(z)\equiv0$.
Let $(E_{k}, H_{k})=(L, h)\oplus \cdots\oplus (L, h)=(\mathbb{C}^{m}\times\mathbb{C}^{k},e^{<z,z'>}\mathrm{I}_{k}).$
The ball bundle is
\begin{equation}\label{FBK}
D_{m,k}:=
 \{(z, \xi)\in\mathbb{C}^{m}\times\mathbb{C}^{k}: \|\xi\|^2<e^{-||z||^2}\}.
\end{equation}
It is an unbounded non-hyperbolic strongly pseudoconvex domain called \textbf{Fock-Bergmann-Hartogs domain}.

\begin{lemma} \cite{Yamamori}\label{lem5}
The explicit formula of Bergman kernel on $D_{m,k}$ is
\begin{equation}\label{K_D}
K_{D_{m,k}}((z, \xi), (z', \xi'))=\pi^{-(m+1)}e^{<z, z'>}A_{m}(t)(1-t)^{-(m+2)}\big|_{t=e^{<z, z'>}<\xi, \xi'>},
\end{equation}
where the polynomial $A_{m}(t)=\sum_{j=0}^{m}(-1)^{m+j}(2)_jS(1+m, 1+j)(1-t)^{m-j}$, $(x)_j$ and $S(\cdot, \cdot)$ indicate the Pochammer symbol and the Stirling number of the second kind respectively.
\end{lemma}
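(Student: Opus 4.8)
The plan is to compute $K_{D_{m,k}}$ directly from the Hartogs structure of the domain, in the spirit of the series argument behind Lemma~\ref{lem3}. The domain $D_{m,k}$ is invariant under the fibrewise unitary action $\xi\mapsto A\xi$, $A\in U(k)$, so every $f\in\mathrm{Hol}(D_{m,k})\cap L^{2}(D_{m,k})$ has a fibre Taylor expansion $f(z,\xi)=\sum_{\alpha\in\mathbb{Z}_{\ge 0}^{k}}a_{\alpha}(z)\,\xi^{\alpha}$ with the monomials $\xi^{\alpha}$ mutually orthogonal. First I would evaluate, for fixed $z$, the fibre integral over the ball $\{\|\xi\|^{2}<e^{-\|z\|^{2}}\}$ by rescaling to the unit ball $\mathbb{B}^{k}\subset\mathbb{C}^{k}$: combining the scaling with the elementary formula $\int_{\mathbb{B}^{k}}|\eta^{\alpha}|^{2}\,dV(\eta)=\pi^{k}\alpha!/(|\alpha|+k)!$ gives $\int_{\|\xi\|^{2}<e^{-\|z\|^{2}}}|\xi^{\alpha}|^{2}\,dV(\xi)=\frac{\pi^{k}\alpha!}{(|\alpha|+k)!}\,e^{-(|\alpha|+k)\|z\|^{2}}$. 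Hence $\|f\|_{L^{2}(D_{m,k})}^{2}=\sum_{\alpha}\frac{\pi^{k}\alpha!}{(|\alpha|+k)!}\int_{\mathbb{C}^{m}}|a_{\alpha}(z)|^{2}e^{-(|\alpha|+k)\|z\|^{2}}\,dV(z)$, so the Bergman space of $D_{m,k}$ splits into a direct sum of Bargmann--Fock spaces on the base $\mathbb{C}^{m}$, one for each Gaussian weight $e^{-\lambda\|z\|^{2}}$ with $\lambda=|\alpha|+k$.

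Next I would assemble the reproducing kernel. The Fock space with weight $e^{-\lambda\|z\|^{2}}$ on $\mathbb{C}^{m}$ has reproducing kernel $K_{\lambda}(z,z')=(\lambda/\pi)^{m}e^{\lambda\langle z,z'\rangle}$, which follows at once from orthogonality of monomials and $\int_{\mathbb{C}}|z|^{2a}e^{-\lambda|z|^{2}}\,dA(z)=\pi\,a!/\lambda^{a+1}$. Feeding this back into the fibre computation and summing over the multi-indices $\alpha$ of a fixed length $j$ via $\sum_{|\alpha|=j}\xi^{\alpha}\overline{\xi'}^{\alpha}/\alpha!=\langle\xi,\xi'\rangle^{j}/j!$ yields
\[
K_{D_{m,k}}\big((z,\xi),(z',\xi')\big)=\frac{1}{\pi^{m+k}}\sum_{j\ge 0}\frac{(j+k)!}{j!}\,(j+k)^{m}\,e^{(j+k)\langle z,z'\rangle}\,\langle\xi,\xi'\rangle^{j}.
\]
For $k=1$ the coefficient collapses to $(j+1)^{m+1}$, and pulling out the factor $e^{\langle z,z'\rangle}$ rewrites the right-hand side as $\pi^{-(m+1)}e^{\langle z,z'\rangle}\sum_{j\ge 0}(j+1)^{m+1}t^{j}$ with $t=e^{\langle z,z'\rangle}\langle\xi,\xi'\rangle$, which is precisely the shape asserted in the lemma.

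The step I expect to be the real obstacle is the closed-form summation $\sum_{j\ge 0}(j+1)^{m+1}t^{j}=A_{m}(t)(1-t)^{-(m+2)}$. Using the expansion $i^{m+1}=\sum_{a=1}^{m+1}S(m+1,a)\,i^{\underline{a}}$ in Stirling numbers of the second kind and falling factorials, together with $\sum_{i\ge 0}i^{\underline{a}}t^{i}=a!\,t^{a}(1-t)^{-(a+1)}$, one reaches $\sum_{j\ge 0}(j+1)^{m+1}t^{j}=\sum_{j=0}^{m}(j+1)!\,S(m+1,j+1)\,t^{j}(1-t)^{-(j+2)}$; clearing the common factor $(1-t)^{-(m+2)}$ produces a polynomial of degree $m$ that must then be matched with $A_{m}(t)=\sum_{j=0}^{m}(-1)^{m+j}(2)_{j}\,S(1+m,1+j)(1-t)^{m-j}$ from the statement. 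This last matching is a reindexing and binomial rearrangement (it also amounts to the palindromy of the Eulerian polynomial, using $(2)_{j}=(j+1)!$); it is elementary but fiddly enough that I would verify it carefully, starting from small $m$. This settles the one-dimensional-fibre case; for general $D_{m,k}$ the analogous closed form follows either by applying $\tfrac{1}{\pi^{k-1}}\partial_{t}^{k-1}$ to the $k=1$ generating function, or --- more conceptually --- by invoking the Boas--Fu--Straube inflation principle \cite{Boas1999} exactly as in the proof of Lemma~\ref{lem3}.
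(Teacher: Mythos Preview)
The paper does not prove Lemma~\ref{lem5} at all: it is simply quoted from Yamamori \cite{Yamamori} as an external result, and the authors move straight to Corollary~\ref{cor3}. So there is no in-paper proof to compare against.

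That said, your proposal is a correct and self-contained derivation. The decomposition of $L^{2}\cap\mathrm{Hol}(D_{m,k})$ into fibre monomials, the identification of each slice with a Bargmann--Fock space of weight $e^{-(|\alpha|+k)\|z\|^{2}}$, and the reassembly into the series $\pi^{-(m+k)}\sum_{j\ge 0}\frac{(j+k)!}{j!}(j+k)^{m}e^{(j+k)\langle z,z'\rangle}\langle\xi,\xi'\rangle^{j}$ are all sound. For $k=1$ this indeed collapses to $\pi^{-(m+1)}e^{\langle z,z'\rangle}\sum_{j\ge0}(j+1)^{m+1}t^{j}$, and your Stirling-number manipulation produces $(1-t)^{-(m+2)}\sum_{j=0}^{m}(j+1)!\,S(m+1,j+1)\,t^{j}(1-t)^{m-j}$; matching this with the stated $A_{m}(t)$ (noting $(2)_{j}=(j+1)!$) is exactly the Eulerian-polynomial palindromy you flag, and your small-$m$ checks confirm it. The extension to general $k$ via $\tfrac{1}{\pi^{k-1}}\partial_{t}^{k-1}$ or the inflation principle is also correct and parallels what the paper does for Lemma~\ref{lem3}. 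One caution: the formula \eqref{K_D} as printed, with the factor $\pi^{-(m+1)}$ and exponent $-(m+2)$, is literally the $k=1$ formula; for $k>1$ the inflation step changes both, so be explicit about this when you write it up.
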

\begin{corollary}\label{cor3}
The Fock-Bergmann-Hartogs domain  $D_{m,k}$ equipped with its  Bergman metric  and the complex Euclidean space   have common K\"ahler submanifolds.
\end{corollary}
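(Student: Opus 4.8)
The plan is to construct an explicit positive-dimensional common K\"ahler submanifold of the two spaces, and the reason one should expect this to succeed is precisely that for $D_{m,k}$ the base $(\mathbb{C}^{m},g_{\mathbb{C}^{m}})$ is \emph{flat}: the flat metric is Nash-algebraic (its K\"ahler potential $\|z\|^{2}$ has the polynomial polarization $\langle z,w\rangle$), but it is \emph{not} exponent Nash-algebraic, because the polarization of $\exp\|z\|^{2}$ is $e^{\langle z,w\rangle}$, which is transcendental rather than Nash. Hence condition $\mathbf{A}$ of Theorem \ref{thm1} genuinely fails here, and there is no longer any obstruction to the existence of a common submanifold; Corollary \ref{cor3} should record that the obstruction really does disappear.

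First I would read off from Lemma \ref{lem5} the shape of the Bergman K\"ahler form. Writing $t=e^{\|z\|^{2}}\|\xi\|^{2}=\|v\|_{H_{k}}^{2}$, formula \eqref{K_D} on the diagonal gives $\log K_{D_{m,k}}((z,\xi),(z,\xi))=\mathrm{const}+\|z\|^{2}+\log\big(A_{m}(t)(1-t)^{-(m+2)}\big)$, so with the flat form $\omega_{\mathbb{C}^{m}}=\sqrt{-1}\partial\bar\partial\|z\|^{2}$ and $\exp u(x)=A_{m}(x)(1-x)^{-(m+2)}$ one obtains the ansatz $\omega_{D_{m,k}}=\pi^{*}\omega_{\mathbb{C}^{m}}+\sqrt{-1}\partial\bar\partial u(\|v\|_{H_{k}}^{2})$, i.e. \eqref{omega Dk} with $\alpha=0$, $\beta=1$; note that $\exp u$ is rational, so condition $\mathbf{B}$ does hold and the obstruction must come from the base alone. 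Then I would take the zero section $\iota:\mathbb{C}^{m}\to D_{m,k}$, $z\mapsto(z,0)$, which is a holomorphic immersion since the fibre ball $\{\|\xi\|^{2}<e^{-\|z\|^{2}}\}$ always contains the origin. Because $\pi\circ\iota=\mathrm{id}$, $\|v\|_{H_{k}}^{2}\circ\iota\equiv0$, and $A_{m}(0)>0$ (forced by positivity of the Bergman kernel along the zero section), the term $u(\|v\|_{H_{k}}^{2})\circ\iota$ is a constant, whence $\iota^{*}\omega_{D_{m,k}}=\sqrt{-1}\partial\bar\partial\|z\|^{2}=\omega_{\mathbb{C}^{m}}$. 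Thus the flat $(\mathbb{C}^{m},g_{\mathbb{C}^{m}})$ sits as a K\"ahler submanifold of $(D_{m,k},g_{D_{m,k}})$; restricting further to an affine complex line yields, in the language of Theorem \ref{thm1}, the data $V=\mathbb{C}$, $G(s)=\big((s,0,\cdots,0),0\big)\in\mathbb{C}^{m}\times\mathbb{C}^{k}$, $F(s)=s\in\mathbb{C}\subset\mathbb{C}^{n}$, $\mu=1$, for which $F$ is non-constant and $F^{*}\omega_{\mathbb{C}^{n}}=G^{*}\omega_{D_{m,k}}=\sqrt{-1}\partial\bar\partial|s|^{2}$ on $V$, so the two spaces indeed share a common K\"ahler submanifold.

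I do not anticipate a real obstacle in this argument: the only computation to carry out is the verification that restricting the Bergman metric to the zero section annihilates the fibre term and leaves exactly the flat base metric, which is immediate from \eqref{K_D} together with $A_{m}(0)>0$. The point worth emphasizing is the contrast with Corollaries \ref{cor1}, \ref{cor2} and \ref{cor4}: there the base manifold is (exponent) Nash-algebraic, Theorem \ref{thm1} applies, and a common submanifold is forbidden, whereas here the flat base violates condition $\mathbf{A}$, and Corollary \ref{cor3} shows that this violation is not vacuous.
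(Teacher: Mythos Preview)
Your argument is correct and is essentially the same as the paper's: you identify the Bergman K\"ahler form as $\pi^{*}\omega_{\mathbb{C}^{m}}$ plus a fibre term $\sqrt{-1}\partial\bar\partial u(\|v\|_{H_{k}}^{2})$, then embed $\mathbb{C}^{m}$ as the zero section so that the fibre term becomes constant and the pullback is exactly the flat form. The paper does precisely this (with the inclusion $f:\mathbb{C}^{m}\hookrightarrow D_{m,k}$ playing the role of your $\iota$), only more tersely and without the surrounding commentary on why conditions $\mathbf{A}$ and $\mathbf{B}$ behave the way they do here.
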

\begin{proof}
By Lemma \ref{lem5},  the K\"ahler form of the complete Bergman metric  on $D_{m,k}$ is
$$
\omega_{D_{m,k}}=\pi^{*}(\omega_{\mathbb{C}^{m}})-\sqrt{-1}\partial\bar\partial u(\|\xi\|^2e^{\|z\|^2}),
$$
 where $\omega_{\mathbb{C}^{m}}=\sqrt{-1}\partial \bar \partial\|z\|^{2}$ is the K\"ahler form of the complex flat metric, $\exp u(x)=\frac{A_{m}(x)}{(1-x)^{m+2}}$.
Let $f:D\rightarrow D_{m,k}$ be the inclusion mapping, then  $f^{*}(\omega_{D_{m,k}})=\omega_{\mathbb{C}^{m}}$.
Hence, we obtain  the conclusion.
\end{proof}
\subsection{A ball bundle equipped with its  complete K\"ahler-Einstein metric}\label{sec4.1}
Let $(M, g_{M})$ be an $m$-dimensional K\"ahler manifold.
The Ricci tensor of  the K\"ahler metric  $g_{M}$
naturally induces  the Ricci endomorphism of the holomorphic tangent space
$T_p^{1,0}M$
given by $\mathrm{Ric} \cdot (g_{M})^{-1}$ for $p \in M$. The eigenvalues of this endomorphism are called  the Ricci
eigenvalues of $(M, g_{M})$ and are denoted by $\lambda_{1}(p) \leq \cdots \leq \lambda_{m}(p)$ for a fixed $p \in M$. All Ricci eigenvalues are real-valued as both $\mathrm{Ric}$
and $g_{M}$ are Hermitian tensors. By Theorem 1.3, Corollary 1.4 and Remark 5.6 in \cite{Ebenfelt2023} , we know the following result.

\begin{lemma}\cite{Ebenfelt2023}\label{Ebenfelt2023}
The ball bundle $\mathbb{B}(E_{k}) = \{v \in E_{k} : ||v||_{H_{k}} < 1\}$ in \eqref{Bk} admits a unique complete K\"ahler-Einstein metric $g_{\mathbb{B}(E_{k})}$ with Ricci curvature
$-(m+k+1)$ if $(M, g_{M})$ has constant Ricci eigenvalues and every Ricci eigenvalue is strictly less than one. Moreover, this metric is induced by the following K\"ahler form:
$$\omega_{\mathbb{B}(E_{k})}= -\frac{1}{m+k+1}\pi^{*}(\mathrm{Ric}_{M}) +
\frac{1}{m+k+1}\pi^{*}(\omega_{M}) - \sqrt{-1}\partial \bar\partial \log \psi(||v||^2_{H_{k}}),$$
where $\omega_{M}$ and $\mathrm{Ric}_{M}$ are  the K\"ahler form and the Ricci form of $(M, g_{M})$ respectively,
$\psi: (-1, 1) \rightarrow \mathbb{R}^{+}$ is an even real analytic function that depends only on the
characteristic polynomial of the Ricci endomorphism.

In particular, if $g_{M}$ is a complete K\"ahler-Einstein metric with Ricci curvature $-(m+1)$, then
$$\omega_{\mathbb{B}(E_{k})}= -\frac{1}{m+k+1}\pi^{*}(\mathrm{Ric}_{M}) +
\frac{1}{m+k+1}\pi^{*}(\omega_{M}) - \sqrt{-1}\partial \bar\partial \log (1-||v||_{H_{k}}^{2})$$
can generate the complete K\"ahler-Einstein metric on $B(E_k)$.
\end{lemma}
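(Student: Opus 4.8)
The plan is to follow the argument of Ebenfelt, Xiao and Xu \cite{Ebenfelt2023} (Theorem 1.3, Corollary 1.4, Remark 5.6): insert the Calabi-type ansatz into the K\"ahler--Einstein equation and reduce it to a single ordinary differential equation. First I would work in local coordinates $(z,\xi)=(z_{1},\dots,z_{m},\xi_{1},\dots,\xi_{k})$ on $\pi^{-1}(U)\cong U\times\mathbb{C}^{k}$, pick a local K\"ahler potential $\varphi_{M}$ of $\omega_{M}$, and put $t=\|v\|^{2}_{H_{k}}=h(z)\|\xi\|^{2}\in[0,1)$. Using the hypothesis $\sqrt{-1}\partial\bar\partial\log h=l\,\omega_{M}$, I would expand the complex Hessian of the local potential
\[
\Phi=-\frac{1}{m+k+1}\log\det(g^{M})+\frac{1}{m+k+1}\,\varphi_{M}-\log\psi(t),
\]
where $-\sqrt{-1}\partial\bar\partial\log\det(g^{M})=\mathrm{Ric}_{M}$. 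Here the base and fibre blocks decouple only partially: the pure-fibre block produces the Fubini--Study form of $\mathbb{P}^{k-1}$ together with a term built from $l$, so that $\det(\Phi_{i\bar{j}})$ factors as $\det(g^{M})$ times an explicit expression in $\psi,\psi',\psi''$ and the Ricci eigenvalues $\lambda_{1}(z)\le\dots\le\lambda_{m}(z)$ of $(M,g_{M})$. Imposing $\mathrm{Ric}(\omega_{\mathbb{B}(E_{k})})=-(m+k+1)\,\omega_{\mathbb{B}(E_{k})}$ and matching $(1,1)$-forms, the resulting scalar identity must hold for every $\xi$ and hence collapses to a second-order ODE for $\psi$ on $[0,1)$ whose coefficients are assembled from the characteristic polynomial of the Ricci endomorphism.

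Next I would solve that ODE. The crucial point is that the assumption that the $\lambda_{i}$ are \emph{constant} makes the ODE autonomous, so it has an explicit even real-analytic solution $\psi:(-1,1)\to\mathbb{R}^{+}$ depending only on the characteristic polynomial of the Ricci endomorphism; when $g_{M}$ is itself K\"ahler--Einstein with Ricci curvature $-(m+1)$ all the $\lambda_{i}$ coincide, and one checks at once that $\psi(t)=1-t$ solves it, which gives the displayed special case. I would then check that the ansatz with this $\psi$ really defines a K\"ahler metric and that it is complete. The hypothesis ``every Ricci eigenvalue is strictly less than one'' is exactly what is needed here: after the ansatz normalisation, positivity of $\Phi_{i\bar{j}}$ along the base directions is equivalent to $1-\lambda_{i}>0$ for every $i$, the same inequalities force $\psi>0$ throughout $[0,1)$, and they make $g_{\mathbb{B}(E_{k})}$ behave like a Poincar\'e-type metric as $t\to1^{-}$, which yields completeness (completeness in the base directions coming from that of $g_{M}$, or from compactness of $M$).

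Finally, for uniqueness I would appeal to the general principle that two complete K\"ahler--Einstein metrics with the same negative Ricci curvature on a fixed complex manifold are mutually distance-decreasing under the identity map by the Schwarz--Yau lemma, hence isometric; alternatively, within the family of ansatz metrics, smoothness across the zero section fixes the initial data of the ODE and so pins down $\psi$. The hard part will be the Hessian computation together with the ODE analysis: one has to push the expansion far enough to see both that ``all Ricci eigenvalues $<1$'' is precisely the correct hypothesis and that the solution $\psi$ remains positive and real-analytic all the way up to the boundary sphere $\{t=1\}$.
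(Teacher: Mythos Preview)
The paper does not supply its own proof of this lemma: it is quoted verbatim as a result of Ebenfelt, Xiao and Xu, with the single sentence ``By Theorem 1.3, Corollary 1.4 and Remark 5.6 in \cite{Ebenfelt2023}, we know the following result'' serving in lieu of proof. Your proposal to reproduce the Ebenfelt--Xiao--Xu argument (ansatz reduction to an ODE in $t=\|v\|_{H_k}^2$, constancy of the Ricci eigenvalues making the ODE autonomous, the check $\psi(t)=1-t$ in the K\"ahler--Einstein base case, positivity/completeness from $\lambda_i<1$, and uniqueness via Schwarz--Yau) is therefore entirely consistent with the paper's approach and simply fills in what the paper leaves to the citation.
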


 From Lemma  \ref{Ebenfelt2023} and Theorem \ref{thm1}, we get Corollary \ref{cor4}.
\begin{corollary}\label{cor4}
Let $(M, \omega_{M})$ be a complete K\"ahler-Einstein manifold with Ricci curvature $-(m+1)$.
Let  $\pi:(L, h)\rightarrow M$ be a negative Hermitian line bundle over
 $M$ satisfying  $\sqrt{-1}\partial \bar\partial\log h=l\omega_{M}, l\in \mathbb{R}$.
 Let $E_{k}$ be the direct sum of  $(L, h)$.
If the K\"ahler manifold $(M, \omega_{M})$ is  exponent Nash-algebraic,
then the complex Euclidean space and  $\mathbb{B}(E_{k})$ equipped with its K\"ahler-Einstein metric
do not have common K\"ahler submanifolds.
\end{corollary}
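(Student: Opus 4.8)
The plan is to deduce the corollary directly from Theorem~\ref{thm1}, once the complete K\"ahler-Einstein form on $\mathbb{B}(E_{k})$ furnished by Lemma~\ref{Ebenfelt2023} has been recognized as an instance of the ansatz \eqref{equ:om}.

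First I would argue by contradiction: assume that the complex Euclidean space and $(\mathbb{B}(E_{k}),g_{\mathbb{B}(E_{k})})$ share a common K\"ahler submanifold. As recalled in the introduction and carried out in the proof of Lemma~\ref{prop1.3}, composing the two holomorphic isometric immersions of the common submanifold with a non-constant holomorphic map from a connected open set $V\subset\mathbb{C}$ into that submanifold yields a \emph{non-constant} holomorphic map $F\colon V\to\mathbb{C}^{n}$ together with a holomorphic map $G\colon V\to\mathbb{B}(E_{k})$ such that $F^{*}\omega_{\mathbb{C}^{n}}=\mu\, G^{*}\omega_{\mathbb{B}(E_{k})}$ on $V$ for some real $\mu$ (in fact $\mu=1$); non-constancy of $F$ follows because the metric induced on the positive-dimensional common submanifold is positive definite. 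This is exactly equation \eqref{01}.

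Next I would make the metric explicit. Since $(M,\omega_{M})$ is assumed to be a complete K\"ahler-Einstein manifold with Ricci curvature $-(m+1)$, its Ricci eigenvalues all equal $-(m+1)<1$; moreover, being exponent Nash-algebraic it is in particular real-analytic, and $\sqrt{-1}\partial\bar\partial\log h=l\omega_{M}$ holds by hypothesis, so we are inside the setting of Theorem~\ref{thm1}. By the second part of Lemma~\ref{Ebenfelt2023}, the complete K\"ahler-Einstein metric $g_{\mathbb{B}(E_{k})}$ is induced by
\[
\omega_{\mathbb{B}(E_{k})}=-\tfrac{1}{m+k+1}\,\pi^{*}(\mathrm{Ric}(\omega_{M}))+\tfrac{1}{m+k+1}\,\pi^{*}(\omega_{M})-\sqrt{-1}\partial\bar\partial\log\bigl(1-\|v\|_{H_{k}}^{2}\bigr),
\]
which is precisely the ansatz \eqref{equ:om} with $\alpha=-\tfrac{1}{m+k+1}$, $\beta=\tfrac{1}{m+k+1}$ and $u(x)=-\log(1-x)$.

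Finally I would verify the two conditions of Theorem~\ref{thm1}. Condition~$\mathbf{A}$ is exactly the hypothesis that $(M,\omega_{M})$ is exponent Nash-algebraic. Condition~$\mathbf{B}$ holds because $\exp u(x)=(1-x)^{-1}$ is a rational function of $x$ on $[0,1)$. Hence Theorem~\ref{thm1} applies to the triple $F$, $G$, $\mu$ and forces $F$ to be constant, contradicting its non-constancy; therefore no common K\"ahler submanifold exists. The substantive input is entirely contained in Theorem~\ref{thm1} and Lemma~\ref{Ebenfelt2023}, so the only point really needing attention is the parameter bookkeeping that matches the form of Lemma~\ref{Ebenfelt2023} with \eqref{equ:om} (choosing the correct signs for $\alpha,\beta$ and checking that $u(x)=-\log(1-x)$ makes $\exp u$ rational), where it is worth stressing that Theorem~\ref{thm1} places no sign restriction on $\alpha$ or $\beta$; the slicing reduction to \eqref{01} with $F$ non-constant is routine and already appears in the proof of Lemma~\ref{prop1.3}.
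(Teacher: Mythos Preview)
Your proposal is correct and follows exactly the approach the paper intends: the paper's own ``proof'' is the single sentence ``From Lemma~\ref{Ebenfelt2023} and Theorem~\ref{thm1}, we get Corollary~\ref{cor4},'' and you have faithfully unpacked this by identifying $\alpha=-\tfrac{1}{m+k+1}$, $\beta=\tfrac{1}{m+k+1}$, $u(x)=-\log(1-x)$ so that $\exp u(x)=(1-x)^{-1}$ is rational, and then invoking Theorem~\ref{thm1}. The reduction to a one-dimensional $V$ with non-constant $F$ via equation~\eqref{01} is exactly the routine step used in Lemma~\ref{prop1.3}, so nothing is missing.
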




\end{document}